\theoremstyle{plain}
\newtheorem{theoreme}{Theorem}[section]
\newtheorem{prop}[theoreme]{Proposition}
\newtheorem{cor}[theoreme]{Corollary}
\theoremstyle{remark}
\newtheorem{ex}[theoreme]{Example}
\newtheorem{remark}[theoreme]{Remark}
\newtheorem{c-ex}[theoreme]{Counter-example}
\date{}
\DeclareMathOperator{\ty}{type}
\DeclareMathOperator{\mty}{marked-type}
\DeclareMathOperator{\SC}{SC}
\DeclareMathOperator{\DB}{DB}
\DeclareMathOperator{\Res}{Res}
\DeclareMathOperator{\diag}{diag}
\DeclareMathOperator{\Id}{Id}
\DeclareMathOperator{\character}{char}
\DeclareMathOperator{\height}{ht}
\DeclareMathOperator{\Gather}{Gather}
\DeclareMathOperator{\blambda}{\bm\lambda}
\DeclareMathOperator{\bnu}{\bm\nu}
\DeclareMathOperator{\bmu}{\bm\mu}
\DeclareMathOperator{\brho}{\bm\rho}
\DeclareMathOperator{\bdelta}{\bm\delta}
\DeclareMathOperator{\bsigma}{\bm\sigma}
\definecolor{ududff}{rgb}{0.30196078431372547,0.30196078431372547,1}
\author[O. Tout]{Omar Tout}
\address{Department of Mathematics, College of Science, Sultan Qaboos University, P. O. Box 36, Al Khod 123, Sultanate of Oman}
\email{o.tout@squ.edu.om}
\title[Generalized characters of the generalized symmetric group]{Generalized characters of the generalized\\ symmetric group}
\subjclass[2020]{05E10, 20C30.}
\keywords{Gelfand pairs, symmetric pairs, generalized symmetric groups, generalized characters, Murnaghan-Nakayama rule}
\begin{document}
\maketitle

\begin{abstract} We prove that $(\mathbb{Z}_k \wr \mathcal{S}_n \times \mathbb{Z}_k \wr \mathcal{S}_{n-1}, \diag (\mathbb{Z}_k \wr \mathcal{S}_{n-1}) )$ is a symmetric Gelfand pair, where $\mathbb{Z}_k \wr \mathcal{S}_n$ is the wreath product of the cyclic group $\mathbb{Z}_k$ with the symmetric group $\mathcal{S}_n.$ The proof is based on the study of the $\mathbb{Z}_k \wr \mathcal{S}_{n-1}$-conjugacy classes of $\mathbb{Z}_k \wr \mathcal{S}_n.$ We define the generalized characters of $\mathbb{Z}_k \wr \mathcal{S}_n$ using the zonal spherical functions of $(\mathbb{Z}_k \wr \mathcal{S}_n \times \mathbb{Z}_k \wr \mathcal{S}_{n-1}, \diag (\mathbb{Z}_k \wr \mathcal{S}_{n-1}) ).$ We show that these generalized characters have properties similar to usual characters. A Murnaghan-Nakayama rule for the generalized characters of the hyperoctahedral group is presented. The generalized characters of the symmetric group were first studied by Strahov in \cite{strahov2007generalized}. 
\end{abstract}

\section{Introduction}

Let $K$ be a subgroup of a finite group $G.$ The pair $(G,K)$ is called a \textit{Gelfand pair} if its associated double-class algebra $\mathbb{C}[K\backslash G/K]$ is commutative. It turns out that $(G,K)$ is a Gelfand pair if and only if the induced representation $1_K^G$ of $K$ on $G$ is multiplicity free, that is each irreducible representation of $G$ appears at most once in the  decomposition of $1_K^G$ as sum of irreducible representations. If $(G,K)$ is a Gelfand pair, the algebra of constant functions over the $K$-double-classes in $G$ has a particular basis, whose elements are called \textit{zonal spherical functions} of the pair $(G,K).$ For example, the pair $(G\times G,\diag(G)),$ where $\diag(G)$ is the diagonal subgroup of $G\times G,$ is a Gelfand pair for any finite group $G,$ see \cite[Section VII.1]{McDo}. Its zonal sphercial functions are the normalized irreducible characters of $G.$\\

The pair $(G,K)$ is said to be \textit{symmetric} if the action of $G$ on $G/K$ is symmetric, that is if $(x,y)\in \mathcal{O}$ then $(y,x)\in \mathcal{O}$ for all orbits $\mathcal{O}$ of $G$ on $G/K\times G/K.$ If the pair $(G,K)$ is symmetric and Gelfand then it is called a \textit{symmetric Gelfand pair}. It is known, see \cite{ceccherini2010representation}, that $(G,K)$ is a symmetric Gelfand pair if and only if $g^{-1}\in KgK$ for all $g\in G.$ Many symmetric Gelfand pairs involving the symmetric group $\mathcal{S}_n$ were studied in the litterature. The pair $(\mathcal{S}_{2n}, \mathcal{H}_n),$ where $\mathcal{H}_n$ is the Hyperoctahedral subgroup of the symmetric group $\mathcal{S}_{2n},$ is a symmetric Gelfand pair studied in \cite[Section VII.2]{McDo} and \cite{toutejc}. In \cite[Example 1.4.10]{ceccherini2010representation},  the authors show that $(\mathcal{S}_{n},\mathcal{S}_{n-k}\times \mathcal{S}_{k})$ is a symmetric Gelfand pair. \\

Two elements $x,y\in G$ are said to be $K$-conjugate if there exists $k\in K$ such that $x=kyk^{-1}.$ Denote by $\diag (K):=\lbrace (k,k)\in G\times K \rbrace$ the diagonal subgroup of $G\times K.$ The pair $(G\times K,\diag (K))$ is a symmetric Gelfand pair if and only if $g^{-1}$ and $g$ are $K$-conjugate for any $g\in G.$ The symmetric Gelfand pair $(\mathcal{S}_n\times \mathcal{S}_{n-1},\diag(\mathcal{S}_{n-1}))$ was considered by Brender in \cite{brender1976spherical}. An explicit formula for its zonal spherical functions was used by Strahov in \cite{strahov2007generalized} to define the generalized characters of the symmetric group. Combinatorial properties and objects arise from these latter such as a Murnaghan–Nakayama type rule and the definition of generalized Schur functions. In \cite{tout2021symmetric}, the author shows that $(\mathcal{H}_n\times \mathcal{H}_{n-1},\diag(\mathcal{H}_{n-1}))$ is a symmetric Gelfand pair.\\

The wreath product $\mathbb{Z}_k \wr \mathcal{S}_n$ of the cyclic group $\mathbb{Z}_k$ with the symmetric group $\mathcal{S}_n$ is usually called the \textit{generalized symmetric group}. In this paper we prove that the $\mathbb{Z}_k \wr \mathcal{S}_{n-1}$-conjugacy classes of $\mathbb{Z}_k \wr \mathcal{S}_n$ can be indexed by marked $k$-partite partitions of $n.$ As a consequence, any element of $\mathbb{Z}_k \wr \mathcal{S}_n$ is $\mathbb{Z}_k \wr \mathcal{S}_{n-1}$-conjugate to its inverse and $(\mathbb{Z}_k \wr \mathcal{S}_n \times \mathbb{Z}_k \wr \mathcal{S}_{n-1}, \diag (\mathbb{Z}_k \wr \mathcal{S}_{n-1}) )$ is a symmetric Gelfand pair. This result can be considered as a generalization of the fact that $(\mathcal{S}_n\times \mathcal{S}_{n-1},\diag(\mathcal{S}_{n-1}))$ is a symmetric Gelfand pair. We define the generalized characters of the generalized symmetric group to be a generalization of the generalized characters of the symmetric group studied by Strahov in \cite{strahov2007generalized}. A special attention is made for the case when $k=2,$ the hyperoctahedral group.  We give a Murnaghan-Nakayama rule and we show the entries of some generalized characters tables.

\section{The theory of Gelfand pairs}\label{sec:symm_gel_pairs}

In this section we suppose that $G$ is a finite group and $K$ is a subgroup of $G.$ We review the needed results from the theory of Gelfand pairs. Proofs for the  results stated here can be found in the book \cite{ceccherini2010representation} of T. Ceccherini-Silberstein, F. Scarabotti, and F. Tolli or in the book \cite{McDo} of I.G. Macdonald. 

\subsection{Gelfand pairs} We denote by $L(G)$ the algebra of complex-valued functions on $G$ and by $L(K\backslash G/K)$ the subalgebra of $L(G)$ of all the functions $f$ that are constant on the double-classes $KxK$ of $G,$ that is:
$$f\in L(K\backslash G/K) \text{ if and only if } f(kxk^{\prime})=f(x) \text{ for any $x\in G$ and $k,k^{\prime}\in K$}.$$
The product of two functions $f_1$ and $f_2$ in $L(G)$ is the convolution product $\ast$ defined by:
$$(f_1\ast f_2)(g):=\sum_{h\in G}f_1(gh^{-1})f_2(h) \text{ for any $g\in G.$}$$ 
The pair $(G,K)$ is called a \textit{Gelfand pair} if the algebra $L(K\backslash G/K)$ is commutative. 

Let $\hat{G}$ be a fixed set of irreducible pairwise inequivalent representations (over $\mathbb{C}$) of $G.$ A representation of $G$ is said to be \textit{multiplicity free} if each irreducible representation of $\hat{G}$ appears at most once in its decomposition as sum of irreducible representations. Consider now the algebra $\mathbb{C}[G/K]$ spanned by the set of left cosets of $K$ in $G$ and let $G$ acts on it as follows:
$$g(k_iK)=(gk_i)K \text{ for every $g\in G$ and every representative $k_i$ of $G/K$}.$$
This representation of $G$ is usually denoted $1_K^G$ and called the induced representation of $K$ on $G.$ It turns out, see \cite[$(1.1)$ page $389$]{McDo}, that $(G,K)$ is a Gelfand pair if and only if the induced representation $1_K^G$ is multiplicity free.

\subsection{Zonal spherical functions}
Suppose that $(G,K)$ is a Gelfand pair and that :
$$1_K^G=\bigoplus_{i=1}^{s}X_i,$$
where $X_i$ are irreducible representations of $G.$ We define the functions $\omega_i:G\rightarrow \mathbb{C},$ using the irreducible characters $\mathcal{X}_i,$ as follows :
\begin{equation}
\omega_i(x)=\frac{1}{|K|}\sum_{k\in K}\mathcal{X}_i(x^{-1}k) \text{ for every $x\in G.$} 
\end{equation} 
The functions $(\omega_i)_{1\leq i\leq s}$ are called the \textit{zonal spherical functions} of the pair $(G,K).$ They have a long list of important properties, see \cite[page 389]{McDo}. They form an orthogonal basis for $L(K\backslash G/K)$ and they satisfy the following equality :
\begin{equation}\label{prop_fon_zon}
\omega_i(x)\omega_i(y)=\frac{1}{|K|}\sum_{k\in K}\omega_i(xky),
\end{equation}
for every $1\leq i\leq s$ and for every $x,y\in G.$

\subsection{Symmetric Gelfand pairs} Suppose now that the group $G$ acts on a finite set $X.$ We say that an orbit $\mathcal{O}$ of $G$ on $X\times X$ is symmetric if $(x,y)\in \mathcal{O}$ implies that $(y,x)\in \mathcal{O}.$ In addition, if all orbits of $G$ on $X\times X$ are symmetric we say that the action of $G$ on $X$ is symmetric. The pair $(G,K)$ is called \textit{symmetric} if the action of $G$ on $G/K$ is symmetric. If in addition $(G,K)$ is a Gelfand pair then we say that $(G,K)$ is a symmetric Gelfand pair. It can be shown, see \cite[Exercise 1.5.25]{ceccherini2010representation} that $(G,K)$ is a symmetric Gelfand pair if and only if $g^{-1}\in KgK$ for all $g\in G.$

We denote by $\diag (K)$ the diagonal subgroup of $G\times K,$ that is the subgroup of $G\times K$ formed of all the elements $(k,k)$ with $k\in K:$
$$\diag (K):=\lbrace (k,k)\in G\times K \rbrace.$$
Two elements $x,y\in G$ are said to be $K$-conjugate if there exists $k\in K$ such that $x=kyk^{-1}.$

Let us denote by $C(G,K)$ the subalgebra of $L(G)$ of functions which are constant on the $K$-conjugacy classes, that is:
\begin{equation*}
C(G,K):=\lbrace f\in L(G) \text{ such that $f(x)=f(y)$ for any two $K$-conjugate elements $x,y\in G$}\rbrace.
\end{equation*}
A proof for the following proposition can be found in \cite[Page $64$]{ceccherini2010representation}.

\begin{prop} \label{prop_couple_sym_Gel}
$(G\times K,\diag (K))$ is a symmetric Gelfand pair if and only if $g^{-1}$ and $g$ are $K$-conjugate for any $g\in G.$ Moreover, if this is the case, then $C(G, K)$ is
commutative and $K$ is a multiplicity-free subgroup of $G.$
\end{prop}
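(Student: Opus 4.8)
The plan is to prove the proposition in two halves. For the first half — the equivalence — I would exploit the general criterion already recorded in the excerpt: the pair $(\mathcal{G}, \mathcal{K})$ is a symmetric Gelfand pair if and only if $g^{-1} \in \mathcal{K} g \mathcal{K}$ for all $g \in \mathcal{G}$. Applying this to $\mathcal{G} = G \times K$ and $\mathcal{K} = \diag(K)$, I would unwind what the condition $(g_1, g_2)^{-1} \in \diag(K) (g_1, g_2) \diag(K)$ means for a general element $(g_1, g_2) \in G \times K$. The key observation is that the double cosets of $\diag(K)$ in $G \times K$ are in bijection with the $K$-conjugacy classes of $G$: a short computation shows $(k, k)(g_1, g_2)(k', k') = (k g_1 k', k g_2 k')$, and using the second coordinate to normalize $g_2$ to the identity, each double coset is determined by the $K$-conjugacy class of $g_1 g_2^{-1} \in G$. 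Under this identification the symmetry condition $g^{-1} \in \mathcal{K}g\mathcal{K}$ translates precisely into the statement that $g$ and $g^{-1}$ are $K$-conjugate for every $g \in G$. This is the heart of the first half, and I expect it to require care but no deep input.

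For the second half I must show that, under the standing hypothesis, $C(G,K)$ is commutative and $K$ is multiplicity-free in $G$. The natural route is to identify $C(G,K)$ with the double-coset algebra $L(\diag(K) \backslash (G \times K) / \diag(K))$. Indeed, a function $f$ on $G \times K$ that is constant on $\diag(K)$-double cosets is, via the bijection above, the same data as a function on $G$ constant on $K$-conjugacy classes, i.e.\ an element of $C(G,K)$; I would make this correspondence explicit and check it is an algebra isomorphism for the relevant products. Since we have just shown $(G \times K, \diag(K))$ is a symmetric Gelfand pair, its double-coset algebra is commutative by definition of Gelfand pair, and hence $C(G,K)$ is commutative.

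Finally, the multiplicity-freeness of $K$ in $G$ follows from the Gelfand property of $(G \times K, \diag(K))$ combined with the standard description of the induced representation $1_{\diag(K)}^{G \times K}$. By the excerpt's criterion, $(G \times K, \diag(K))$ being Gelfand means $1_{\diag(K)}^{G \times K}$ is multiplicity-free as a representation of $G \times K$. I would then invoke the decomposition of this induced representation: as a $G \times K$-module it decomposes according to how the irreducible $K$-representations sit inside the restrictions of irreducible $G$-representations, so that each irreducible $X_i$ of $G$ pairs with $\Res^G_K X_i$. Multiplicity-freeness of $1_{\diag(K)}^{G \times K}$ then forces each $\Res^G_K X_i$ to be multiplicity-free, which is exactly the statement that $K$ is a multiplicity-free subgroup of $G$.

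The step I expect to be the main obstacle is pinning down the algebra isomorphism between $C(G,K)$ and the double-coset algebra $L(\diag(K)\backslash (G\times K)/\diag(K))$ with the correct convolution structures: one must verify not merely a set-theoretic bijection of bases (double cosets versus $K$-conjugacy classes) but that the convolution product on $G \times K$ restricted to $\diag(K)$-bi-invariant functions matches the natural product on $K$-class functions on $G$. Getting the normalizing factors and the direction of the conjugation consistent is the delicate bookkeeping; once that identification is secured, commutativity and multiplicity-freeness both follow immediately from the Gelfand property established in the first half.
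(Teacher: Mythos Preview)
The paper does not supply its own proof of this proposition; it simply cites \cite[Page 64]{ceccherini2010representation}. Your plan is correct and is essentially the standard argument found there: the bijection between $\diag(K)$-double cosets in $G\times K$ and $K$-conjugacy classes in $G$ via $(g_1,g_2)\mapsto g_1g_2^{-1}$ reduces the symmetric Gelfand criterion to the $K$-conjugacy of $g$ and $g^{-1}$; the commutativity of $C(G,K)$ follows from the algebra isomorphism $\Phi:L(\diag(K)\backslash G\times K/\diag(K))\to C(G,K)$, which the paper in fact quotes explicitly a few lines later as \cite[Lemma 2.1.1]{ceccherini2010representation}; and multiplicity-freeness of $K$ in $G$ is exactly the Frobenius-reciprocity unpacking of the multiplicity-freeness of $1_{\diag(K)}^{G\times K}$. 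Your anticipated ``main obstacle'' (matching convolutions under $\Phi$) is a routine verification and is the content of that cited lemma, so there is no genuine gap.
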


\subsection{$K$-generalized characters}
Recall that any irreducible representation of $G\times K$ is of the form $X\times Y$ where $X$ and $Y$ are irreducible representations of $G$ and $K$ respectively. Suppose that $(G\times K,\diag (K))$ is a symmetric Gelfand pair with 
$$1_{\diag (K)}^{G\times K}=\bigoplus_{1\leq i\leq r \atop{1\leq j\leq s}} X_i\times Y_j, $$
where $X_i$ and $Y_j$ are irreducible representations of $G$ and $K$ respectively for any $i$ and $j.$
The zonal spherical functions of the pair $(G\times K,\diag (K))$ are then given by:
\begin{eqnarray*}
\omega_{i,j}(x,y)&=&\frac{1}{|K|}\sum_{h\in K}\mathcal{X}_i(x^{-1}h)\mathcal{Y}_j(y^{-1}h)
\end{eqnarray*} 
for any $(x,y)\in G\times K.$ In \cite[Lemma 2.1.1]{ceccherini2010representation}, the authors showed that the map $$\Phi:L(\diag(K) \backslash G\times K / \diag(K))\rightarrow C(G,K)$$ defined by:
\begin{equation*}
[\Phi(F)](g)=|K|F(g,1_G)
\end{equation*}
is a linear isomorphism of algebras and that 
\begin{equation*}
||\Phi(F)||_{L(G)}^2=|K| ||F||_{L(G\times K)}^2.
\end{equation*}
As stated before, the zonal spherical functions $(\omega_{i,j})_{1\leq i\leq r \atop{1\leq j\leq s}}$ of the pair $(G\times K,\diag (K))$ form an orthogonal basis for $L(\diag(K) \backslash G\times K / \diag(K)).$ The image of the functions $\frac{\mathcal{Y}_j(1)}{|K|}\omega_{i,j}$ by $\Phi,$ which shall be denoted $\varpi_{i,j},$ form an orthogonal basis for the algebra $C(G,K).$ They will be called the \textit{$K$-generalized characters} of the group $G$ and they are explicitly defined by  
\begin{equation}\label{Def_Gen_char}
\varpi_{i,j}(g)=\mathcal{Y}_j(1)\omega_{i,j}(g,1) \text{ for any $g\in G.$}
\end{equation}

\section{Partitions and Young diagrams}

A \textit{partition} $\lambda$ is a weakly decreasing list of positive integers $(\lambda^1,\ldots,\lambda^l)$ where $\lambda^1\geq \lambda^2\geq\ldots \geq\lambda^l\geq 1.$ The $\lambda^i$ are called the \textit{parts} of $\lambda$; the \textit{size} of $\lambda$, denoted by $|\lambda|$, is the sum of all of its parts. If $|\lambda|=n$, we say that $\lambda$ is a partition of $n.$ The set of all partitions of $n$ will be denoted $\mathcal{P}_n$ and $\mathcal{P}_0$ will be considered to be the empty set $\emptyset.$ If $j$ is a part of a partition $\lambda$ of $n$ then we will write $\lambda-(j)$ to designate the partition of $n-j$ obtained by removing the part $j$ of $\lambda.$ Similarly, we define $\lambda+(j)$ to designate the partition of $n+j$ obtained by adding the part $j$ to $\lambda.$ We will also use the exponential notation $\lambda=(1^{m_1(\lambda)},2^{m_2(\lambda)},3^{m_3(\lambda)},\ldots),$ where $m_i(\lambda)$ is the number of parts equal to $i$ in the partition $\lambda.$ In case there is no confusion, we will omit $\lambda$ from $m_i(\lambda)$ to simplify our notation. If $\lambda=(1^{m_1(\lambda)},2^{m_2(\lambda)},3^{m_3(\lambda)},\ldots,n^{m_n(\lambda)})$ is a partition of $n$ then $\sum_{i=1}^n im_i(\lambda)=n.$ We will dismiss $i^{m_i(\lambda)}$ from $\lambda$ when $m_i(\lambda)=0,$ for example, we will write $\lambda=(1,3,5^2)$ instead of $\lambda=(1,2^0,3,4^0,5^2).$

Any partition $\lambda=(\lambda^1,\ldots,\lambda^l)$ of $n$ can be represented by a Young diagram. This is an array of $n$ squares having $l$ left-justified rows with row $i$ containing $\lambda^i$ squares for $1\leq i\leq l.$ For example, the following is the Young diagram of the partition $\lambda=(5,4,2,1,1)$ of $13$
\[ \ydiagram{5,4,2,1,1} \]
If each cell of the Young diagram of a partition $\delta$ is also a cell of the Young diagram of a partition $\lambda,$ then we say that $\delta$ is included in $\lambda$ and we write $\delta\subseteq \lambda.$ In case $\delta\subseteq \lambda,$ we define the \textit{difference} $\lambda/\delta,$ usually called the \textit{skew Young diagram}, to be the set of cells in the Young diagram of $\lambda$ which are not in the Young diagram of $\delta.$ A skew Young diagram $\lambda/\delta$ is a \textit{border strip} if it is connected and does not contain any $2\times 2$ block of cells. The \textit{height}, denoted $\height,$ of a border strip is one less than the number of rows it occupies. For example, if $\lambda=(5,4,2,1,1),$ $\delta=(4,2,1),$ $\mu=(2,1)$ and $\rho=(3,1),$ then $\lambda/\delta$ and $\lambda/\mu$ are not border strips but $\lambda/\rho$ is a border strip of height $4.$ Their corresponding skew Young diagrams are respectively shown below 
\[ \ydiagram{4+1,2+2,1+1,1,1} \,\,\,\,\,\,\,\,\,\,\,\,\,\,\,\,\,\,\,\,\,\,\,\, \ydiagram{2+3,1+3,2,1,1} \,\,\,\,\,\,\,\,\,\,\,\,\,\,\,\,\,\,\,\,\,\,\,\, \ydiagram{3+2,1+3,2,1,1}\]

A \textit{$k$-partite partition} $\blambda$ of $n$ is a $k$-tuple $(\lambda_0,\lambda_1,\ldots, \lambda_{k-1}),$ where each $\lambda_i$ is a partition and $|\lambda_0|+|\lambda_1|+\cdots +|\lambda_{k-1}|=n.$ If $\blambda$ is a $k$-partite partition and $j$ is a part of $\lambda_i$ for some $0\leq i\leq k-1,$ then we say that $j$ is a part of $\blambda$ and define $P(j)$ to be the smallest $i$ such that $j\in \lambda_i.$  All the above definitions can be naturally extended for $k$-partite partitions of $n.$ For example, we write $\bdelta\subseteq \blambda$ if $\delta_i\subseteq \lambda_i$ for any $0\leq i\leq k-1$ and in this case $\blambda/\bdelta=(\lambda_0/ \delta_0,\lambda_1/ \delta_1,\cdots ,\lambda_{k-1}/ \delta_{k-1}).$ We say that $\blambda/\bdelta$ is a border strip if one of its constituents is a border strip and the others are $\emptyset.$ We abuse notation and use $P(\blambda/\bdelta)$ to be the index of the border strip constituent and $\height(\blambda/\bdelta)$ to be its height. 

An \textit{exterior corner} of a Young diagram $\mathcal{Y}$ having $n$ squares is an extremity of a row where a new square can be added to obtain a new Young diagram with $n+1$ squares. Below we mark by bullets the four exterior corners of the Young diagram of $\lambda=(5,3,3,1,1)$
$$\young({}{}{}{}{}\bullet,{}{}{}{},{}{}{}{},{}{},{}{})\,\,\,\,\,\,\,\,\,\,\,\,\,\, \young({}{}{}{}{}{},{}{}{}\bullet,{}{}{}{},{}{},{}{})\,\,\,\,\,\,\,\,\,\,\,\,\,\, \young({}{}{}{}{}{},{}{}{}{},{}{}{}{},{}\bullet,{}{})\,\,\,\,\,\,\,\,\,\,\,\,\,\, \young({}{}{}{}{}{},{}{}{}{},{}{}{}{},{}{},{}{},\bullet)$$
If $\lambda$ is a partition of $n$ that can be obtained from the partition $\mu$ of $n-1$ by adding only one square in an exterior corner of the Young diagram of $\mu$ then we will write $\mu\nearrow \lambda.$ We extend this notation to $k$-partite partitions of $n$ and we write $\bmu\nearrow \blambda$ if $\mu_i\nearrow \lambda_i$ for some $i$ and $\mu_j= \lambda_j$ for $j\neq i.$

We will need to track information from a particular part of a partition $\lambda$ of $n.$ For this, we define a \textit{marked partition} of $n$ to be a couple $(j,\rho)$ where $1\leq j\leq n$ and $\rho\in \mathcal{P}_{n-j}.$ A \textit{marked partition} $(j,\rho)$ of $n$ can be represented by  the Young diagram of $\rho+(j)$ with the row corresponding to $j$ marked by red. Alternatively, we may mark with $\ast$ the part in a partition to designate a marked partition. For example, the marked partition $(3,(4,3,2,1))$ can be written $(4,3,3^\ast,2,1)$ and its diagram representation is as follows

\[
\ydiagram[*(white) ]
{4,3,0,2,1}
*[*(red)]{4,3,3,2,1}
\]

If $\mu\nearrow \lambda$ where $\lambda$ is a partition of $n$ then $(j,\lambda-(j))$ is a marked partition of $n$ where $j$ is the part representing the row in which the exterior corner belongs. 

A $k$-partite partition of $n$ is said to be marked if one and only one of its constituents is a marked partition. We will write $\blambda^{(j,\lambda_i)}$ to say that $\blambda$ is marked at its $i^{th}$ constituent and the marked part is $j.$

\section{Generalised symmetric group} 

\subsection{Wreath products} The \textit{wreath product} $G \wr \mathcal{S}_n$ is the group with underlying set $G^n\times \mathcal{S}_n$ and product defined as follows:
$$((\sigma_1,\ldots ,\sigma_n); p)\cdot ((\epsilon_1,\ldots ,\epsilon_n); q)=((\sigma_{q^{-1}(1)}\epsilon_1,\ldots ,\sigma_{q^{-1}(1)}\epsilon_n);pq),$$
for any $((\sigma_1,\ldots ,\sigma_n); p),((\epsilon_1,\ldots ,\epsilon_n); q)\in G^n\times \mathcal{S}_n.$ We apply $p$ before $q$ when we write the product $pq.$ The identity in this group is $(1;1):=((1_G,1_G,\ldots ,1_G); \Id_n).$ The inverse of an element $((\sigma_1,\sigma_2,\ldots ,\sigma_n); p)\in G \wr \mathcal{S}_n$ is given by $$((\sigma_1,\sigma_2,\ldots ,\sigma_n); p)^{-1}=((\sigma^{-1}_{p(1)},\sigma^{-1}_{p(2)},\ldots ,\sigma^{-1}_{p(n)}); p^{-1}).$$

\subsection{Conjugacy classes of $G \wr \mathcal{S}_n$} Let $x = (g; p)\in G \wr \mathcal{S}_n,$ where $g = (g_1,\ldots , g_n) \in G^n$ and $p \in \mathcal{S}_n$ is written as a product of disjoint cycles. If $(i_1,i_2,\ldots, i_r)$ is a cycle of $p,$ the element $g_{i_r}g_{i_{r-1}}\ldots g_{i_1}\in G$ is determined up to conjugacy in $G$ by $g$ and $(i_1,i_2,\ldots, i_r),$ and is called the \textit{cycle product} of $x$ corresponding to the cycle $(i_1,i_2,\ldots, i_r),$ see \cite[Page $170$]{McDo}. We denote by $G_\star$ the set of conjugacy classes of $G.$ For any conjugacy class $c\in G_\star,$ we denote by $\lambda(c)$ the partition written in the exponential way where $m_i(\lambda(c))$ is the number of cycles of length $i$ in $p$ whose cycle-product lies in $c$ for each integer $i\geq 1.$ Then each element $x=(g; p)\in G \wr \mathcal{S}_n$ gives rise to a family of partitions $\Lambda=(\lambda(c))_{c\in G_\star}$ indexed by $G_\star$ 
such that 
$$\sum_{i\geq 1,c\in G_\star}im_i(\lambda(c))=n.$$ 
This family of partitions is called the \textit{type} of $x$ and denoted $\ty(x).$ It turns out, see \cite[Page $170$]{McDo}, that two permutations are conjugate in $G\wr \mathcal{S}_n$ if and only if they have the same type. Thus the conjugacy classes of $G\wr \mathcal{S}_n$ are in correspondence with families of partitions indexed by $G_\star.$ If $\Lambda=(\lambda(c))_{c\in G_\star},$ we will denote by $C_\Lambda$ its associated conjugacy class:
\begin{equation*}
C_\Lambda:=\lbrace x\in G\wr \mathcal{S}_n; \ty(x)=\Lambda\rbrace.
\end{equation*}
For a partition $\rho$ of $n,$ let 
\[
z_\rho:=\prod_{1\leq i\leq n}i^{{m_i(\rho)}}{m_i(\rho)}!.
\]
By \cite[Appendix B]{McDo}, the order of the centralizer of an element of type $\Lambda$ in $G\wr \mathcal{S}_n$ is 
\begin{equation*}
Z_\Lambda=\prod_{c\in G_\star}z_{\lambda(c)}\xi_c^{l(\lambda(c))},
\end{equation*}
where $\xi_c=\frac{|G|}{|c|}$ is the order of the centralizer of an element $g\in c$ in $G.$ Thus, if $\Lambda=(\lambda(c))_{c\in G_\star},$ the cardinal of $C_\Lambda$ is given by:
\begin{equation*}
|C_\Lambda|=\frac{|G\wr \mathcal{S}_n|}{Z_\Lambda}=\frac{|G|^nn!}{\prod\limits_{c\in G_\star}z_{\lambda(c)}\xi_c^{l(\lambda(c))}}.
\end{equation*} 

\subsection{Generalized symmetric group} The \textit{generalized symmetric group} is the wreath product $\mathbb{Z}_k \wr \mathcal{S}_n,$ where $\mathbb{Z}_k:=\lbrace 0,1,\cdots , k-1\rbrace$ is the cyclic group of order $k.$ Since there are exactly $k$ conjugacy classes of $\mathbb{Z}_k$ consisting of $\lbrace i\rbrace$ for each $0\leq i\leq k-1,$ the type of $x= (g; p)\in \mathbb{Z}_k \wr \mathcal{S}_n$ is the $k$-partite partition $\blambda=(\lambda_0,\lambda_1,\ldots, \lambda_{k-1})$ of $n,$ where each partition $\lambda_i$ is formed out of cycles $c$ of $p$ whose cycle product equals $i.$ Since $\mathbb{Z}_k$ is a group for addition, we will use \textit{cycle sum} from now on. For the remaining of this paper, it would be very important to track the cycle of $p$ containing $n.$ As such, we define the \textit{marked-type} of $x$ to be the marked $k$-partite partition of $n$ obtained from the type of $x$ by marking the part corresponding to the cycle containing $n$ in $p.$ For example, consider the element $x=(g,p)\in \mathbb{Z}_{3}\wr \mathcal{S}_{10}$ where $g=(1,1,2,0,1,1,1,2,1,0)$ and $p=(1,4)(2,5)(3)(6)(7,8,9,10).$ The cycle sum of $(1,4)$ is $1+0=1,$ of $(2,5)$ is $1+1=2,$ of $(3)$ is $2,$ of $(6)$ is $1$ and of $(7,8,9,10)$ is $1+2+1+0=1$ in $\mathbb{Z}_{3}.$ Thus $\ty(x)=(\emptyset,(4,2,1),(2,1))$ and $\mty(x)=(\emptyset,(4^\ast,2,1),(2,1)).$\\

An element $(g; p)\in \mathbb{Z}_k \wr \mathcal{S}_n$ may be represented by a diagram obtained by drawing the two lines permutation diagram associated to $p,$ with the nodes of the bottom row labeled by the elements $g_i,$ $1\leq i\leq n.$
\begin{ex}\label{example1} The element $x=(g,p)\in \mathbb{Z}_{3}\wr \mathcal{S}_{10}$ where $g=(1,1,2,0,1,1,1,2,1,0)$ and $p=(1,4)(2,5)(3)(6)(7,8,9)(10)$ is represented by the following diagram
\[
\begin{tikzpicture}[line cap=round,line join=round,x=1cm,y=1cm]
\draw [line width=1pt] (-15,7)-- (-12,5);
\draw [line width=1pt] (-14,7)-- (-11,5);
\draw [line width=1pt] (-13,7)-- (-13,5);
\draw [line width=1pt] (-10,7)-- (-10,5);
\draw [line width=1pt] (-12,7)-- (-15,5);
\draw [line width=1pt] (-11,7)-- (-14,5);
\draw [line width=1pt] (-9,7)-- (-8,5);
\draw [line width=1pt] (-8,7)-- (-7,5);
\draw [line width=1pt] (-7,7)-- (-9,5);
\draw [line width=1pt] (-6,7)-- (-6,5);
\begin{scriptsize}
\draw [fill=black] (-15,7) circle (2.5pt);
\draw [fill=black] (-12,5) circle (2.5pt);
\draw[color=black] (-11.84,5.2) node {0};
\draw [fill=black] (-14,7) circle (2.5pt);
\draw [fill=black] (-11,5) circle (2.5pt);
\draw[color=black] (-10.84,5.2) node {1};
\draw [fill=black] (-13,7) circle (2.5pt);
\draw [fill=black] (-13,5) circle (2.5pt);
\draw[color=black] (-12.84,5.2) node {2};
\draw [fill=black] (-10,7) circle (2.5pt);
\draw [fill=black] (-10,5) circle (2.5pt);
\draw[color=black] (-9.84,5.2) node {1};
\draw [fill=black] (-12,7) circle (2.5pt);
\draw [fill=black] (-15,5) circle (2.5pt);
\draw[color=black] (-14.84,5.2) node {1};
\draw [fill=black] (-11,7) circle (2.5pt);
\draw [fill=black] (-14,5) circle (2.5pt);
\draw[color=black] (-13.84,5.2) node {1};
\draw [fill=black] (-9,7) circle (2.5pt);
\draw [fill=black] (-8,5) circle (2.5pt);
\draw[color=black] (-7.84,5.2) node {2};
\draw [fill=black] (-8,7) circle (2.5pt);
\draw [fill=black] (-7,5) circle (2.5pt);
\draw[color=black] (-6.84,5.2) node {1};
\draw [fill=black] (-7,7) circle (2.5pt);
\draw [fill=black] (-6,5) circle (2.5pt);
\draw[color=black] (-5.84,5.2) node {0};
\draw [fill=black] (-6,7) circle (2.5pt);
\draw [fill=black] (-9,5) circle (2.5pt);
\draw[color=black] (-8.84,5.2) node {1};
\end{scriptsize}
\end{tikzpicture}
\]
\end{ex}
The inverse of an element $(g; p)\in \mathbb{Z}_k \wr \mathcal{S}_n$ is $((-g_{p(1)},-g_{p(2)},\ldots ,-g_{p(n)}); p^{-1})$ while the product of two elements $(g; p), (h;q)\in \mathbb{Z}_k \wr \mathcal{S}_n$ can be obtained easily by drawing the diagram of $(h;q)$ below the diagram of $(g; p)$ then considering the resulted diagram. 
\begin{ex}\label{example2} Consider $x$ to be the element of the previous example. If $y=(h,q)\in \mathbb{Z}_{3}\wr \mathcal{S}_{10}$ where $h=(0,1,1,0,1,2,1,0,0,1)$ and $q=(1,3,7)(2)(10,9,4,8,5,6)$ then $$y^{-1}=((2,2,2,0,1,2,0,2,0,0); (1,7,3)(2)(10,6,5,8,4,9))$$ and $xy=((1,2,2,1,0,0,0,0,0,2);(1,8,4,3,7,5,2,6,10,9))$ as can be obtained from the following diagram
\[
\begin{tikzpicture}[line cap=round,line join=round,x=1cm,y=1cm]
\draw [line width=1pt] (-15,7)-- (-12,5);
\draw [line width=1pt] (-14,7)-- (-11,5);
\draw [line width=1pt] (-13,7)-- (-13,5);
\draw [line width=1pt] (-10,7)-- (-10,5);
\draw [line width=1pt] (-12,7)-- (-15,5);
\draw [line width=1pt] (-11,7)-- (-14,5);
\draw [line width=1pt] (-9,7)-- (-8,5);
\draw [line width=1pt] (-8,7)-- (-7,5);
\draw [line width=1pt] (-7,7)-- (-9,5);
\draw [line width=1pt] (-6,7)-- (-6,5);
\draw [line width=1pt] (-15,5)-- (-13,3);
\draw [line width=1pt] (-6,5)-- (-7,3);
\draw [line width=1pt] (-9,5)-- (-15,3);
\draw [line width=1pt] (-14,5)-- (-14,3);
\draw [line width=1pt] (-13,5)-- (-9,3);
\draw [line width=1pt] (-7,5)-- (-12,3);
\draw [line width=1pt] (-12,5)-- (-8,3);
\draw [line width=1pt] (-8,5)-- (-11,3);
\draw [line width=1pt] (-11,5)-- (-10,3);
\draw [line width=1pt] (-10,5)-- (-6,3);
\begin{scriptsize}
\draw [fill=black] (-15,7) circle (2.5pt);
\draw [fill=black] (-12,5) circle (2.5pt);
\draw[color=black] (-11.84,5.2) node {0};
\draw [fill=black] (-14,7) circle (2.5pt);
\draw [fill=black] (-11,5) circle (2.5pt);
\draw[color=black] (-10.84,5.2) node {1};
\draw [fill=black] (-13,7) circle (2.5pt);
\draw [fill=black] (-13,5) circle (2.5pt);
\draw[color=black] (-12.84,5.2) node {2};
\draw [fill=black] (-10,7) circle (2.5pt);
\draw [fill=black] (-10,5) circle (2.5pt);
\draw[color=black] (-9.84,5.2) node {1};
\draw [fill=black] (-12,7) circle (2.5pt);
\draw [fill=black] (-15,5) circle (2.5pt);
\draw[color=black] (-14.84,5.2) node {1};
\draw [fill=black] (-11,7) circle (2.5pt);
\draw [fill=black] (-14,5) circle (2.5pt);
\draw[color=black] (-13.84,5.2) node {1};
\draw [fill=black] (-9,7) circle (2.5pt);
\draw [fill=black] (-8,5) circle (2.5pt);
\draw[color=black] (-7.84,5.2) node {2};
\draw [fill=black] (-8,7) circle (2.5pt);
\draw [fill=black] (-7,5) circle (2.5pt);
\draw[color=black] (-6.84,5.2) node {1};
\draw [fill=black] (-7,7) circle (2.5pt);
\draw [fill=black] (-6,5) circle (2.5pt);
\draw[color=black] (-5.84,5.2) node {0};
\draw [fill=black] (-6,7) circle (2.5pt);
\draw [fill=black] (-9,5) circle (2.5pt);
\draw[color=black] (-8.84,5.2) node {1};
\draw [fill=black] (-6,3) circle (2.5pt);
\draw[color=black] (-5.84,3.2) node {1};
\draw [fill=black] (-9,3) circle (2.5pt);
\draw[color=black] (-8.84,3.2) node {1};
\draw [fill=black] (-15,3) circle (2.5pt);
\draw[color=black] (-14.84,3.2) node {0};
\draw [fill=black] (-14,3) circle (2.5pt);
\draw[color=black] (-13.84,3.2) node {1};
\draw [fill=black] (-7,3) circle (2.5pt);
\draw[color=black] (-6.84,3.2) node {0};
\draw [fill=black] (-12,3) circle (2.5pt);
\draw[color=black] (-11.84,3.2) node {0};
\draw [fill=black] (-8,3) circle (2.5pt);
\draw[color=black] (-7.84,3.2) node {0};
\draw [fill=black] (-11,3) circle (2.5pt);
\draw[color=black] (-10.84,3.2) node {1};
\draw [fill=black] (-10,3) circle (2.5pt);
\draw[color=black] (-9.84,3.2) node {2};
\draw [fill=black] (-13,3) circle (2.5pt);
\draw[color=black] (-12.84,3.2) node {1};
\end{scriptsize}
\end{tikzpicture}
\]
The reader should have remarked that to obtain the $i^{th}$ component of the vector forming $xy,$ we sum up the labels following the $i^{th}$ dot of the bottom. 
\end{ex}

\begin{prop}\label{Main-Prop} Two elements of $\mathbb{Z}_k \wr \mathcal{S}_n$ are in the same $\mathbb{Z}_k \wr \mathcal{S}_{n-1}$ conjugacy class if and only if they have the same marked-type.
\end{prop}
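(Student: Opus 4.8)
The plan is to prove both implications by understanding how conjugation by the two natural kinds of generators of the subgroup acts on a general element $x=(g;p)$, where I regard $\mathbb{Z}_k \wr \mathcal{S}_{n-1}$ as the subgroup of $\mathbb{Z}_k \wr \mathcal{S}_n$ consisting of those $(h;q)$ with $q(n)=n$ and $h_n=0$. These generators are the \emph{colour elements} $(h;\Id_n)$ with $h_n=0$ and the \emph{permutation elements} $(h;q)$ with trivial colour vector and $q(n)=n$. The necessity direction is then short: since $\mathbb{Z}_k \wr \mathcal{S}_{n-1}$ is a subgroup of $\mathbb{Z}_k \wr \mathcal{S}_n$, two $\mathbb{Z}_k \wr \mathcal{S}_{n-1}$-conjugate elements are a fortiori conjugate in the full group and hence share the same $\ty$; the cycle sums lie in the abelian group $\mathbb{Z}_k$, so they are preserved exactly. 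Because every conjugator $(h;q)$ satisfies $q(n)=n$, conjugation carries the cycle of $p$ through $n$ to the cycle of $qpq^{-1}$ through $q(n)=n$, preserving both its length and its cycle sum; hence the marked constituent and the marked part survive, so $\mty(x)=\mty(y)$.

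The real content is the sufficiency direction, which I would prove by fixing a canonical representative for each marked $k$-partite partition $\blambda^{(j,\lambda_i)}$ and showing that every element of that marked-type is $\mathbb{Z}_k \wr \mathcal{S}_{n-1}$-conjugate to it; transitivity of conjugacy then closes the argument. Given $x=(g;p)$ with $\mty(x)=\blambda^{(j,\lambda_i)}$, first I would apply a permutation conjugation by some $q\in\mathcal{S}_{n-1}$ to standardise the underlying permutation: since $q$ fixes $n$, it can carry the length-$j$ cycle through $n$ onto the prescribed cycle $(n-j+1,\dots,n-1,n)$ and arrange the remaining cycles on $\{1,\dots,n-j\}$ in an order depending only on $\lambda_0,\dots,\lambda_{k-1}$. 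Next I would apply a colour conjugation; a direct computation with the multiplication rule shows that conjugating by $(h;\Id_n)$ replaces $g_m$ by $g_m+h_{p^{-1}(m)}-h_m$, so inside each cycle one may freely redistribute the colours while keeping the cycle sum fixed, concentrating all of it at a single chosen position.

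The main obstacle, and the reason the invariant must be the \emph{marked}-type rather than the plain type, is the constraint $h_n=0$ imposed on the colour conjugators lying in the subgroup. For a cycle avoiding $n$ this constraint is vacuous and the sum can be concentrated anywhere. For the cycle through $n$ it removes one degree of freedom, and the key point is the triangular computation along the marked cycle $(i_1,\dots,i_{j-1},n)$: solving $h_{i_1}=g_{i_1}$ and $h_{i_m}=h_{i_{m-1}}+g_{i_m}$ never requires a value of $h_n$, and the resulting partial sums push the \emph{entire} cycle sum precisely onto position $n$, leaving the other colours of that cycle equal to $0$. Thus the cycle sum of the marked cycle, equivalently the index $i$ of its constituent, is pinned to position $n$ and cannot be transported away, so it persists as a genuine invariant while everything else can be normalised.

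After these two conjugations the element $x$ has been brought to a canonical form depending only on $\blambda^{(j,\lambda_i)}$: the marked cycle is $(n-j+1,\dots,n)$ carrying colour $i$ at position $n$, and each remaining cycle sits in its standard position with its sum concentrated at a prescribed site. Consequently any two elements of $\mathbb{Z}_k \wr \mathcal{S}_n$ with the same marked-type reduce to the same canonical element and are therefore $\mathbb{Z}_k \wr \mathcal{S}_{n-1}$-conjugate to one another, which together with the necessity direction establishes the stated equivalence.
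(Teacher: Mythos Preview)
Your argument is correct, and it reaches the conclusion by a genuinely different route from the paper. The paper proves sufficiency \emph{directly}: given $x=(g;p)$ and $y=(h;q)$ with the same marked-type, it chooses a permutation $t\in\mathcal{S}_{n-1}$ sending the cycles of $p$ bijectively onto those of $q$ with $t(n)=n$, and then writes down the single linear system $f_i-f_{t(p^{-1}(t^{-1}(i)))}=h_i-g_{t^{-1}(i)}$ over $\mathbb{Z}_k$; the hypothesis $\mty(x)=\mty(y)$ is exactly the cycle-by-cycle consistency condition for this system, and the one free parameter on the cycle through $n$ is spent by setting $f_n=0$, yielding $z=(f;t)\in\mathbb{Z}_k\wr\mathcal{S}_{n-1}$ with $y=z^{-1}xz$. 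Your approach instead fixes a canonical representative for each marked-type and normalises in two stages, first by a permutation conjugator and then by a pure colour conjugator, with the triangular computation along the marked cycle explaining why the constraint $h_n=0$ forces the cycle sum to sit at position $n$. The paper's proof is shorter and avoids having to specify a canonical form, while yours is more structural: it isolates precisely which degrees of freedom the subgroup loses (the single colour coordinate at $n$) and makes transparent why the marked part is the only new invariant beyond the ordinary type. Both arguments handle the necessity direction in essentially the same way.
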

\begin{proof} Let us show that $\mty(x)=\mty(z^{-1}xz)$ for any $x=(g;p)\in \mathbb{Z}_k \wr \mathcal{S}_n$ and any $z=(f;t)\in \mathbb{Z}_k \wr \mathcal{S}_{n-1}.$ First remark that $\ty(x)=\ty(z^{-1}xz)$ as $z$ can be considered an element of $\mathbb{Z}_k \wr \mathcal{S}_{n}$ by extending naturally $t$ to $\mathcal{S}_n$ then setting $f_n$ to be $0.$ To prove that $\mty(x)=\mty(z^{-1}xz),$ let us focus on the cycle $c$ of $p$ containing $n.$ Suppose $c=(n,c_2,\cdots , c_r)$ then $(t(n),t(c_2),\cdots , t(c_r))=(n,t(c_2),\cdots , t(c_r))$ is a cycle of $t^{-1}pt.$ The cycle sum of $(n,t(c_2),\cdots , t(c_r))$ is 
\[
f_n+g_{t^{-1}(n)}-f_{t(p^{-1}(t^{-1}(n)))}+\sum_{i=2}^r \left( f_{t(c_i)}+g_{t^{-1}(t(c_i))}-f_{t(p^{-1}(t^{-1}(t(c_i))))}\right)\]
\[=f_n+g_{n}-f_{t(c_r)}+f_{t(c_2)}+g_{c_2}-f_{n}+\sum_{i=3}^r \left( f_{t(c_i)}+g_{c_i}-f_{t(c_{i-1})}\right)
\]
\[
=g_{n}-f_{t(c_r)}+f_{t(c_2)}+g_{c_2}-f_{t(c_2)}+f_{t(c_r)}+\sum_{i=3}^rg_{c_i}=g_n+\sum_{i=2}^rg_{c_i}
\]
which is the cycle sum of $c.$

Conversely, Suppose that $x=(g;p),y=(h;q)\in \mathbb{Z}_k \wr \mathcal{S}_n$ with $\mty(x)=\mty(y).$ Then $p$ and $q$ should have the same cycle-type as permutations of $n$ with $n$ being in a cycle of the same length in both $p$ and $q.$ Take $t$ to be any permutation of $n$ such that if $c=(c_1,c_2,\cdots ,c_r)$ is a cycle of $p$ then $t(c)=(t(c_1),t(c_2),\cdots ,t(c_r))$ is a cycle of $q$ with $t(n)=n.$ Take a solution of the linear system $f_i-f_{t(p^{-1}(t^{-1}(i)))}=h_i-g_{t^{-1}(i)},$ $1\leq i\leq n$ that satisfies $f_n=0.$ Then $y=z^{-1}xz$ with $z=(f;t)\in \mathbb{Z}_k \wr \mathcal{S}_{n-1}$ which ends the proof. 
\end{proof}

\begin{ex} Recall the elements $x$ and $y$ of Examples \ref{example1} and \ref{example2}, then $$xyx^{-1}=((1,2,0,0,1,2,1,2,2,2);(1,7,2,6,10,8)(4,3,9)(5))$$ as can be obtained from the following diagram
\[
\begin{tikzpicture}[line cap=round,line join=round,x=1cm,y=1cm]
\draw [line width=1pt] (-15,7)-- (-12,5);
\draw [line width=1pt] (-14,7)-- (-11,5);
\draw [line width=1pt] (-13,7)-- (-13,5);
\draw [line width=1pt] (-10,7)-- (-10,5);
\draw [line width=1pt] (-12,7)-- (-15,5);
\draw [line width=1pt] (-11,7)-- (-14,5);
\draw [line width=1pt] (-9,7)-- (-8,5);
\draw [line width=1pt] (-8,7)-- (-7,5);
\draw [line width=1pt] (-7,7)-- (-9,5);
\draw [line width=1pt] (-6,7)-- (-6,5);
\draw [line width=1pt] (-15,5)-- (-13,3);
\draw [line width=1pt] (-6,5)-- (-7,3);
\draw [line width=1pt] (-9,5)-- (-15,3);
\draw [line width=1pt] (-14,5)-- (-14,3);
\draw [line width=1pt] (-13,5)-- (-9,3);
\draw [line width=1pt] (-7,5)-- (-12,3);
\draw [line width=1pt] (-12,5)-- (-8,3);
\draw [line width=1pt] (-8,5)-- (-11,3);
\draw [line width=1pt] (-11,5)-- (-10,3);
\draw [line width=1pt] (-10,5)-- (-6,3);
\draw [line width=1pt] (-15,3)-- (-12,1);
\draw [line width=1pt] (-12,3)-- (-15,1);
\draw [line width=1pt] (-14,3)-- (-11,1);
\draw [line width=1pt] (-13,3)-- (-13,1);
\draw [line width=1pt] (-11,3)-- (-14,1);
\draw [line width=1pt] (-10,3)-- (-10,1);
\draw [line width=1pt] (-9,3)-- (-7,1);
\draw [line width=1pt] (-8,3)-- (-9,1);
\draw [line width=1pt] (-7,3)-- (-8,1);
\draw [line width=1pt] (-6,3)-- (-6,1);
\begin{scriptsize}
\draw [fill=black] (-15,7) circle (2.5pt);
\draw [fill=black] (-12,5) circle (2.5pt);
\draw[color=black] (-11.84,5.2) node {0};
\draw [fill=black] (-14,7) circle (2.5pt);
\draw [fill=black] (-11,5) circle (2.5pt);
\draw[color=black] (-10.84,5.2) node {1};
\draw [fill=black] (-13,7) circle (2.5pt);
\draw [fill=black] (-13,5) circle (2.5pt);
\draw[color=black] (-12.84,5.2) node {2};
\draw [fill=black] (-10,7) circle (2.5pt);
\draw [fill=black] (-10,5) circle (2.5pt);
\draw[color=black] (-9.84,5.2) node {1};
\draw [fill=black] (-12,7) circle (2.5pt);
\draw [fill=black] (-15,5) circle (2.5pt);
\draw[color=black] (-14.84,5.2) node {1};
\draw [fill=black] (-11,7) circle (2.5pt);
\draw [fill=black] (-14,5) circle (2.5pt);
\draw[color=black] (-13.84,5.2) node {1};
\draw [fill=black] (-9,7) circle (2.5pt);
\draw [fill=black] (-8,5) circle (2.5pt);
\draw[color=black] (-7.84,5.2) node {2};
\draw [fill=black] (-8,7) circle (2.5pt);
\draw [fill=black] (-7,5) circle (2.5pt);
\draw[color=black] (-6.84,5.2) node {1};
\draw [fill=black] (-7,7) circle (2.5pt);
\draw [fill=black] (-6,5) circle (2.5pt);
\draw[color=black] (-5.84,5.2) node {0};
\draw [fill=black] (-6,7) circle (2.5pt);
\draw [fill=black] (-9,5) circle (2.5pt);
\draw[color=black] (-8.84,5.2) node {1};
\draw [fill=black] (-6,3) circle (2.5pt);
\draw[color=black] (-5.84,3.2) node {1};
\draw [fill=black] (-9,3) circle (2.5pt);
\draw[color=black] (-8.84,3.2) node {1};
\draw [fill=black] (-15,3) circle (2.5pt);
\draw[color=black] (-14.84,3.2) node {0};
\draw [fill=black] (-14,3) circle (2.5pt);
\draw[color=black] (-13.84,3.2) node {1};
\draw [fill=black] (-7,3) circle (2.5pt);
\draw[color=black] (-6.84,3.2) node {0};
\draw [fill=black] (-12,3) circle (2.5pt);
\draw[color=black] (-11.84,3.2) node {0};
\draw [fill=black] (-8,3) circle (2.5pt);
\draw[color=black] (-7.84,3.2) node {0};
\draw [fill=black] (-11,3) circle (2.5pt);
\draw[color=black] (-10.84,3.2) node {1};
\draw [fill=black] (-10,3) circle (2.5pt);
\draw[color=black] (-9.84,3.2) node {2};
\draw [fill=black] (-13,3) circle (2.5pt);
\draw[color=black] (-12.84,3.2) node {1};
\draw [fill=black] (-12,1) circle (2.5pt);
\draw[color=black] (-11.84,1.2) node {2};
\draw [fill=black] (-15,1) circle (2.5pt);
\draw[color=black] (-14.84,1.2) node {0};
\draw [fill=black] (-11,1) circle (2.5pt);
\draw[color=black] (-10.84,1.2) node {2};
\draw [fill=black] (-13,1) circle (2.5pt);
\draw[color=black] (-12.84,1.2) node {1};
\draw [fill=black] (-14,1) circle (2.5pt);
\draw[color=black] (-13.84,1.2) node {2};
\draw [fill=black] (-10,1) circle (2.5pt);
\draw[color=black] (-9.84,1.2) node {2};
\draw [fill=black] (-6,1) circle (2.5pt);
\draw[color=black] (-5.84,1.2) node {0};
\draw [fill=black] (-9,1) circle (2.5pt);
\draw[color=black] (-8.84,1.2) node {1};
\draw [fill=black] (-8,1) circle (2.5pt);
\draw[color=black] (-7.84,1.2) node {2};
\draw [fill=black] (-7,1) circle (2.5pt);
\draw[color=black] (-6.84,1.2) node {2};
\end{scriptsize}
\end{tikzpicture}
\]
We leave it to the reader to check that $\mty(xyx^{-1})=\mty(y)=(\emptyset,(6^\ast,1),(3)).$
\end{ex}

\begin{ex}
Consider the elements $x=((1,0,2,1,1,0,2);(1,2,3)(4,5)(6,7))$ and $y=((0,2,1,0,0,0,1);(1,4,5)(2,6)(3,7))$ of $\mathbb{Z}_3 \wr \mathcal{S}_7.$ They both have the same marked-type $((3),\emptyset,(2,2^\ast)).$ Let $t=(1)(2,4)(3,5,6)(7).$ The linear system $f_i-f_{t(p^{-1}(t^{-1}(i)))}=h_i-g_{t^{-1}(i)},$ $1\leq i\leq n$ that satisfies $f_n=0,$ has the following equations:
\[
\systeme{f_1-f_5=2,f_2-f_6=1,f_3-f_7=1,f_4-f_1=0,f_5-f_4=1,f_6-f_2=2,f_7-f_3=2}
\]
Setting $f_7=0,$ the system has many solutions, for example $f_1=1,$ $f_2=1,$ $f_3=1,$ $f_4=1,$ $f_5=2,$ $f_6=0$ and $f_7=0.$ Thus $y=z^{-1}xz$ for $z=((1,1,1,1,2,0,0);(1)(2,4)(3,5,6)(7))\in \mathbb{Z}_3 \wr \mathcal{S}_6$ which can be verified by drawing the following diagram of $z^{-1}xz$
\[
\begin{tikzpicture}[line cap=round,line join=round,x=1cm,y=1cm]
\draw [line width=1pt] (-15,7)-- (-15,5);
\draw [line width=1pt] (-14,7)-- (-12,5);
\draw [line width=1pt] (-13,7)-- (-10,5);
\draw [line width=1pt] (-10,7)-- (-11,5);
\draw [line width=1pt] (-12,7)-- (-14,5);
\draw [line width=1pt] (-11,7)-- (-13,5);
\draw [line width=1pt] (-9,7)-- (-9,5);
\draw [line width=1pt] (-15,5)-- (-14,3);
\draw [line width=1pt] (-9,5)-- (-10,3);
\draw [line width=1pt] (-14,5)-- (-13,3);
\draw [line width=1pt] (-13,5)-- (-15,3);
\draw [line width=1pt] (-12,5)-- (-11,3);
\draw [line width=1pt] (-11,5)-- (-12,3);
\draw [line width=1pt] (-10,5)-- (-9,3);
\draw [line width=1pt] (-15,3)-- (-15,1);
\draw [line width=1pt] (-12,3)-- (-14,1);
\draw [line width=1pt] (-14,3)-- (-12,1);
\draw [line width=1pt] (-13,3)-- (-11,1);
\draw [line width=1pt] (-11,3)-- (-10,1);
\draw [line width=1pt] (-10,3)-- (-13,1);
\draw [line width=1pt] (-9,3)-- (-9,1);
\begin{scriptsize}
\draw [fill=black] (-15,7) circle (2.5pt);
\draw [fill=black] (-12,5) circle (2.5pt);
\draw[color=black] (-11.84,5.2) node {2};
\draw [fill=black] (-14,7) circle (2.5pt);
\draw [fill=black] (-11,5) circle (2.5pt);
\draw[color=black] (-10.84,5.2) node {0};
\draw [fill=black] (-13,7) circle (2.5pt);
\draw [fill=black] (-13,5) circle (2.5pt);
\draw[color=black] (-12.84,5.2) node {1};
\draw [fill=black] (-10,7) circle (2.5pt);
\draw [fill=black] (-10,5) circle (2.5pt);
\draw[color=black] (-9.84,5.2) node {2};
\draw [fill=black] (-12,7) circle (2.5pt);
\draw [fill=black] (-15,5) circle (2.5pt);
\draw[color=black] (-14.84,5.2) node {2};
\draw [fill=black] (-11,7) circle (2.5pt);
\draw [fill=black] (-14,5) circle (2.5pt);
\draw[color=black] (-13.84,5.2) node {2};
\draw [fill=black] (-9,7) circle (2.5pt);
\draw [fill=black] (-9,5) circle (2.5pt);
\draw[color=black] (-8.84,5.2) node {0};
\draw [fill=black] (-9,3) circle (2.5pt);
\draw[color=black] (-8.84,3.2) node {2};
\draw [fill=black] (-15,3) circle (2.5pt);
\draw[color=black] (-14.84,3.2) node {1};
\draw [fill=black] (-14,3) circle (2.5pt);
\draw[color=black] (-13.84,3.2) node {0};
\draw [fill=black] (-12,3) circle (2.5pt);
\draw[color=black] (-11.84,3.2) node {1};
\draw [fill=black] (-11,3) circle (2.5pt);
\draw[color=black] (-10.84,3.2) node {1};
\draw [fill=black] (-10,3) circle (2.5pt);
\draw[color=black] (-9.84,3.2) node {0};
\draw [fill=black] (-13,3) circle (2.5pt);
\draw[color=black] (-12.84,3.2) node {2};
\draw [fill=black] (-12,1) circle (2.5pt);
\draw[color=black] (-11.84,1.2) node {1};
\draw [fill=black] (-15,1) circle (2.5pt);
\draw[color=black] (-14.84,1.2) node {1};
\draw [fill=black] (-11,1) circle (2.5pt);
\draw[color=black] (-10.84,1.2) node {2};
\draw [fill=black] (-13,1) circle (2.5pt);
\draw[color=black] (-12.84,1.2) node {1};
\draw [fill=black] (-14,1) circle (2.5pt);
\draw[color=black] (-13.84,1.2) node {1};
\draw [fill=black] (-10,1) circle (2.5pt);
\draw[color=black] (-9.84,1.2) node {0};
\draw [fill=black] (-9,1) circle (2.5pt);
\draw[color=black] (-8.84,1.2) node {0};
\end{scriptsize}
\end{tikzpicture}
\]
\end{ex}

\begin{remark} The reader should check that any other solution for the linear system in the above example will give us a valid $z$ that satisfies $y=z^{-1}xz.$ For example $$z=((0,0,1,0,1,2,0);(1)(2,4)(3,5,6)(7))\in \mathbb{Z}_3 \wr \mathcal{S}_6.$$
\[
\begin{tikzpicture}[line cap=round,line join=round,x=1cm,y=1cm]
\draw [line width=1pt] (-15,7)-- (-15,5);
\draw [line width=1pt] (-14,7)-- (-12,5);
\draw [line width=1pt] (-13,7)-- (-10,5);
\draw [line width=1pt] (-10,7)-- (-11,5);
\draw [line width=1pt] (-12,7)-- (-14,5);
\draw [line width=1pt] (-11,7)-- (-13,5);
\draw [line width=1pt] (-9,7)-- (-9,5);
\draw [line width=1pt] (-15,5)-- (-14,3);
\draw [line width=1pt] (-9,5)-- (-10,3);
\draw [line width=1pt] (-14,5)-- (-13,3);
\draw [line width=1pt] (-13,5)-- (-15,3);
\draw [line width=1pt] (-12,5)-- (-11,3);
\draw [line width=1pt] (-11,5)-- (-12,3);
\draw [line width=1pt] (-10,5)-- (-9,3);
\draw [line width=1pt] (-15,3)-- (-15,1);
\draw [line width=1pt] (-12,3)-- (-14,1);
\draw [line width=1pt] (-14,3)-- (-12,1);
\draw [line width=1pt] (-13,3)-- (-11,1);
\draw [line width=1pt] (-11,3)-- (-10,1);
\draw [line width=1pt] (-10,3)-- (-13,1);
\draw [line width=1pt] (-9,3)-- (-9,1);
\begin{scriptsize}
\draw [fill=black] (-15,7) circle (2.5pt);
\draw [fill=black] (-12,5) circle (2.5pt);
\draw[color=black] (-11.84,5.2) node {0};
\draw [fill=black] (-14,7) circle (2.5pt);
\draw [fill=black] (-11,5) circle (2.5pt);
\draw[color=black] (-10.84,5.2) node {1};
\draw [fill=black] (-13,7) circle (2.5pt);
\draw [fill=black] (-13,5) circle (2.5pt);
\draw[color=black] (-12.84,5.2) node {2};
\draw [fill=black] (-10,7) circle (2.5pt);
\draw [fill=black] (-10,5) circle (2.5pt);
\draw[color=black] (-9.84,5.2) node {2};
\draw [fill=black] (-12,7) circle (2.5pt);
\draw [fill=black] (-15,5) circle (2.5pt);
\draw[color=black] (-14.84,5.2) node {0};
\draw [fill=black] (-11,7) circle (2.5pt);
\draw [fill=black] (-14,5) circle (2.5pt);
\draw[color=black] (-13.84,5.2) node {0};
\draw [fill=black] (-9,7) circle (2.5pt);
\draw [fill=black] (-9,5) circle (2.5pt);
\draw[color=black] (-8.84,5.2) node {0};
\draw [fill=black] (-9,3) circle (2.5pt);
\draw[color=black] (-8.84,3.2) node {2};
\draw [fill=black] (-15,3) circle (2.5pt);
\draw[color=black] (-14.84,3.2) node {1};
\draw [fill=black] (-14,3) circle (2.5pt);
\draw[color=black] (-13.84,3.2) node {0};
\draw [fill=black] (-12,3) circle (2.5pt);
\draw[color=black] (-11.84,3.2) node {1};
\draw [fill=black] (-11,3) circle (2.5pt);
\draw[color=black] (-10.84,3.2) node {1};
\draw [fill=black] (-10,3) circle (2.5pt);
\draw[color=black] (-9.84,3.2) node {0};
\draw [fill=black] (-13,3) circle (2.5pt);
\draw[color=black] (-12.84,3.2) node {2};
\draw [fill=black] (-12,1) circle (2.5pt);
\draw[color=black] (-11.84,1.2) node {0};
\draw [fill=black] (-15,1) circle (2.5pt);
\draw[color=black] (-14.84,1.2) node {0};
\draw [fill=black] (-11,1) circle (2.5pt);
\draw[color=black] (-10.84,1.2) node {1};
\draw [fill=black] (-13,1) circle (2.5pt);
\draw[color=black] (-12.84,1.2) node {1};
\draw [fill=black] (-14,1) circle (2.5pt);
\draw[color=black] (-13.84,1.2) node {0};
\draw [fill=black] (-10,1) circle (2.5pt);
\draw[color=black] (-9.84,1.2) node {2};
\draw [fill=black] (-9,1) circle (2.5pt);
\draw[color=black] (-8.84,1.2) node {0};
\end{scriptsize}
\end{tikzpicture}
\]
\end{remark}

\begin{cor}\label{cor:sym_Gelf_pair} The pair $(\mathbb{Z}_k \wr \mathcal{S}_n \times \mathbb{Z}_k \wr \mathcal{S}_{n-1}, \diag (\mathbb{Z}_k \wr \mathcal{S}_{n-1}) )$ is a symmetric Gelfand pair.
\end{cor}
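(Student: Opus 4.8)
The plan is to obtain the corollary by feeding Proposition \ref{Main-Prop} into the abstract criterion of Proposition \ref{prop_couple_sym_Gel}. Put $G=\mathbb{Z}_k\wr\mathcal{S}_n$ and $K=\mathbb{Z}_k\wr\mathcal{S}_{n-1}$, viewed as a subgroup of $G$ in the usual way (acting on the first $n-1$ letters, with trivial $\mathbb{Z}_k$-component in position $n$, exactly as in the proof of Proposition \ref{Main-Prop}). By Proposition \ref{prop_couple_sym_Gel}, the pair $(G\times K,\diag(K))$ is a symmetric Gelfand pair if and only if $g$ and $g^{-1}$ are $K$-conjugate for every $g\in G$. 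So the first step is simply to record that the whole statement collapses to a single $K$-conjugacy assertion inside $G$.

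The second step translates this $K$-conjugacy into combinatorics. Proposition \ref{Main-Prop} says that two elements of $G$ are $K$-conjugate exactly when they carry the same marked-type. Hence the corollary is equivalent to the identity $\mty(g)=\mty(g^{-1})$ for every $g=(g;p)\in G$, and this equality of marked $k$-partite partitions is what I would set out to prove; the passage back to the Gelfand-pair statement is then purely formal.

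For the third step I would compute $\mty(g^{-1})$ directly from the explicit inverse $g^{-1}=((-g_{p(1)},\dots,-g_{p(n)});p^{-1})$. Since the cycles of $p^{-1}$ are the reverses of those of $p$, the permutation $p^{-1}$ has the same cycle-type as $p$ and the cycle containing $n$ keeps its length, so the shape of the underlying $k$-partite partition and the position of the marked part are automatically matched. What remains — and what I expect to be the main obstacle — is the cycle-sum bookkeeping that decides into which partite component each cycle is placed and which label the marked part receives. Writing $c=(i_1,\dots,i_r)$ for a cycle of $p$ with cycle sum $s=\sum_j g_{i_j}$, one evaluates the cycle sum of the corresponding reversed cycle of $p^{-1}$ from the entries $-g_{p(i_j)}$, using that $p$ permutes the index set $\{i_1,\dots,i_r\}$, and then compares the result with $s$. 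The delicate point is to control how this value relates to $s$, and therefore how the negation involution $i\mapsto -i$ of $\mathbb{Z}_k$ acts on the partite components $\lambda_0,\dots,\lambda_{k-1}$. The crux of the argument is precisely to show that, component by component and for the marked component in particular, this bookkeeping reproduces the original marked-type $\mty(g)$; once that invariance is secured one gets $\mty(g)=\mty(g^{-1})$, and the corollary follows from the two propositions above.
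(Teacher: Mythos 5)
Your reduction is exactly the paper's: combine Proposition \ref{prop_couple_sym_Gel} with Proposition \ref{Main-Prop} to collapse the corollary to the single identity $\mty(x)=\mty(x^{-1})$ for every $x=(g;p)\in\mathbb{Z}_k\wr\mathcal{S}_n$ (the paper's own proof asserts this identity in one line as ``clear from the cycle decomposition''). But your third step, the only substantive one, is announced rather than executed: you correctly isolate the delicate point --- how the negation involution of $\mathbb{Z}_k$ acts on the partite components --- and then simply posit that the bookkeeping reproduces $\mty(x)$. Carrying out the computation you describe gives the opposite. If $c=(i_1,\ldots,i_r)$ is a cycle of $p$ with cycle sum $s=g_{i_1}+\cdots+g_{i_r}$, the corresponding cycle of $p^{-1}$ has the same support, and since that support is $p$-invariant, summing the entries $-g_{p(i_j)}$ of $x^{-1}=((-g_{p(1)},\ldots,-g_{p(n)});p^{-1})$ over it yields $-\sum_j g_{i_j}=-s$. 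So inversion moves each cycle from component $\lambda_s$ to component $\lambda_{(-s)\bmod k}$: the shape data you checked (cycle type of $p^{-1}$, length of the cycle through $n$, hence the position of the mark within its component) are preserved, but $\ty(x^{-1})$ is $\ty(x)$ with its components permuted by negation, and the marked component moves accordingly.

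Consequently the invariance you need holds for all $x$ exactly when negation is trivial on $\mathbb{Z}_k$, i.e.\ when $k\in\{1,2\}$. For $k\geq 3$ it fails: take $x=((1,0,\ldots,0);\Id_n)\in\mathbb{Z}_3\wr\mathcal{S}_n$, whose type has a part $1$ in component $1$, while $x^{-1}=((2,0,\ldots,0);\Id_n)$ has that part in component $2$; already the ordinary types differ, so by Proposition \ref{Main-Prop} the elements $x$ and $x^{-1}$ are certainly not $\mathbb{Z}_k\wr\mathcal{S}_{n-1}$-conjugate, and the criterion of Proposition \ref{prop_couple_sym_Gel} fails. Your outline, completed honestly, therefore \emph{disproves} the key identity for $k\geq 3$ rather than proving it; it does yield the corollary for $k=1$ and $k=2$ (where $-a=a$ in $\mathbb{Z}_k$ --- precisely the symmetric-group and hyperoctahedral cases the paper develops afterwards). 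Note that this gap is inherited from the paper itself: its one-line proof rests on the same unproved assertion $\mty(x)=\mty(x^{-1})$, and since Proposition \ref{Main-Prop} identifies $\mathbb{Z}_k\wr\mathcal{S}_{n-1}$-conjugacy classes with marked-types, no alternative argument can rescue the symmetric property for $k\geq 3$ (the pair remains Gelfand, by multiplicity-freeness of the restriction, but not symmetric). The correct conclusion of your computation is the relation $\lambda_i(x^{-1})=\lambda_{(k-i)\bmod k}(x)$, which forces the claimed symmetry only for $k\leq 2$.
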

\begin{proof}
It would be clear from the cycle decomposition that $\mty(x)=\mty(x^{-1})$ for any $x\in \mathbb{Z}_k \wr \mathcal{S}_n$ which implies that $x$ and $x^{-1}$ are $\mathbb{Z}_k \wr \mathcal{S}_{n-1}$-conjugate by Proposition \ref{Main-Prop}. The result then follows from Proposition \ref{prop_couple_sym_Gel}.
\end{proof}

As a consequence of Proposition \ref{Main-Prop}, the $\mathbb{Z}_k \wr \mathcal{S}_{n-1}$-conjugacy classes of  $\mathbb{Z}_k \wr \mathcal{S}_{n}$ can be index by marked $k$-partite partitions of $n.$ If $\blambda^{(j,\lambda_i)}$ is a marked $k$-partite partitions, then its associated $\mathbb{Z}_k \wr \mathcal{S}_{n-1}$-conjugacy class is 
\[
C_{\blambda^{(j,\lambda_i)}}:=\lbrace x\in \mathbb{Z}_k \wr \mathcal{S}_{n} \mid \mty(x)=\blambda^{(j,\lambda_i)}\rbrace.
\]

\section{generalized characters of $\mathbb{Z}_k \wr \mathcal{S}_{n}$} 

In the previous section, we showed in Corollary \ref{cor:sym_Gelf_pair} that the pair $(\mathbb{Z}_k \wr \mathcal{S}_n \times \mathbb{Z}_k \wr \mathcal{S}_{n-1}, \diag (\mathbb{Z}_k \wr \mathcal{S}_{n-1}) )$ is a symmetric Gelfand pair. This implies that $1_{\diag (\mathbb{Z}_k \wr \mathcal{S}_{n-1})}^{\mathbb{Z}_k \wr \mathcal{S}_n \times \mathbb{Z}_k \wr \mathcal{S}_{n-1}}$ is multiplicity free. Its character is given explicitly in the following proposition.

If $f$ and $g$ are two functions defined on the group $\mathbb{Z}_k \wr \mathcal{S}_{n},$ then their scalar product is defined by:
\[
\langle f,g\rangle_{\mathbb{Z}_k \wr \mathcal{S}_{n}}=\frac{1}{|\mathbb{Z}_k \wr \mathcal{S}_{n}|}\sum_{x\in \mathbb{Z}_k \wr \mathcal{S}_{n}} f(x)\overline{g(x)}.
\]
\begin{prop}
The induced representation $1_{\diag (\mathbb{Z}_k \wr \mathcal{S}_{n-1})}^{\mathbb{Z}_k \wr \mathcal{S}_n \times \mathbb{Z}_k \wr \mathcal{S}_{n-1}}$ is multiplicity free and its character is:
\begin{equation}
\character \left(1_{\diag (\mathbb{Z}_k \wr \mathcal{S}_{n-1})}^{\mathbb{Z}_k \wr \mathcal{S}_n \times \mathbb{Z}_k \wr \mathcal{S}_{n-1}}\right)=\sum_{\brho \text{$k$-partite partition of $n,$} \atop{ \bsigma\nearrow \brho }}\chi^{\brho}\times \chi^{\bsigma}.
\end{equation}
\end{prop}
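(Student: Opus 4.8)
The plan is to treat the two assertions separately. The multiplicity-freeness is immediate and requires no extra work: by Corollary \ref{cor:sym_Gelf_pair} the pair is a symmetric Gelfand pair, and by the general theory recalled in Section \ref{sec:symm_gel_pairs} a pair is Gelfand exactly when the associated induced representation is multiplicity free. Thus the entire content lies in identifying which irreducible constituents occur, which I would do through Frobenius reciprocity combined with the branching rule for the tower $\mathbb{Z}_k \wr \mathcal{S}_{n-1}\subset \mathbb{Z}_k \wr \mathcal{S}_n.$

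Write $G=\mathbb{Z}_k \wr \mathcal{S}_n$ and $K=\mathbb{Z}_k \wr \mathcal{S}_{n-1}.$ Every irreducible representation of $G\times K$ has the form $\chi^{\brho}\times \chi^{\bsigma},$ where $\brho$ is a $k$-partite partition of $n$ and $\bsigma$ a $k$-partite partition of $n-1.$ By Frobenius reciprocity the multiplicity of $\chi^{\brho}\times \chi^{\bsigma}$ in the induced representation equals
\[
\left\langle 1_{\diag(K)},\ \Res^{G\times K}_{\diag(K)}\left(\chi^{\brho}\times \chi^{\bsigma}\right)\right\rangle_{\diag(K)}.
\]
Identifying $\diag(K)$ with $K$ through $(h,h)\mapsto h,$ the restriction of $\chi^{\brho}\times \chi^{\bsigma}$ to $\diag(K)$ is the ordinary product of class functions $\left(\Res^{G}_{K}\chi^{\brho}\right)\cdot \chi^{\bsigma}$ on $K,$ so the multiplicity above is $\left\langle \Res^{G}_{K}\chi^{\brho},\ \chi^{\bsigma}\right\rangle_{K},$ that is, the multiplicity of $\chi^{\bsigma}$ in the restriction of $\chi^{\brho}$ to $K.$

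The decisive step is then the branching rule
\[
\Res^{\mathbb{Z}_k \wr \mathcal{S}_n}_{\mathbb{Z}_k \wr \mathcal{S}_{n-1}}\chi^{\brho}=\bigoplus_{\bsigma\nearrow \brho}\chi^{\bsigma},
\]
the generalized symmetric group analogue of the classical branching rule for $\mathcal{S}_n.$ It asserts that restricting $\chi^{\brho}$ decomposes, without multiplicities, into the characters $\chi^{\bsigma}$ indexed by the $k$-partite partitions obtained by deleting a single cell from one component of $\brho$ — which is precisely the relation $\bsigma\nearrow \brho$ introduced earlier for $k$-partite partitions. Substituting this into the multiplicity computation shows that $\chi^{\brho}\times \chi^{\bsigma}$ occurs, necessarily exactly once, if and only if $\bsigma\nearrow \brho,$ giving the claimed character.

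I expect the branching rule to be the only genuine obstacle; everything preceding it is formal. The rule is classical and can be quoted from the representation theory of wreath products (the induction and restriction functors along $\mathbb{Z}_k \wr \mathcal{S}_{n-1}\subset \mathbb{Z}_k \wr \mathcal{S}_n,$ see \cite{McDo}), since the irreducibles of $\mathbb{Z}_k \wr \mathcal{S}_n$ are assembled from symmetric group characters on each $\mathbb{Z}_k$-isotypic component and restriction acts componentwise by the $\mathcal{S}_n\downarrow \mathcal{S}_{n-1}$ rule. If a self-contained derivation is preferred, one can argue at the level of characters: evaluate both sides on a conjugacy class of $\mathbb{Z}_k \wr \mathcal{S}_{n-1}$ and compare using the character formula for wreath products, the only care required being the bookkeeping of the $\mathbb{Z}_k$-labels so that a removed cell is charged to the correct component of $\brho.$
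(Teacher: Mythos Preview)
Your argument is correct and follows essentially the same route as the paper: Frobenius reciprocity reduces the multiplicity of $\chi^{\brho}\times\chi^{\bsigma}$ in the induced representation to $\langle \Res^{\mathbb{Z}_k\wr\mathcal{S}_n}_{\mathbb{Z}_k\wr\mathcal{S}_{n-1}}\chi^{\brho},\chi^{\bsigma}\rangle$, and the branching rule for the tower $\mathbb{Z}_k\wr\mathcal{S}_{n-1}\subset\mathbb{Z}_k\wr\mathcal{S}_n$ (which the paper quotes from \cite[Corollary~5.3]{stein2017littlewood}) then gives the answer. The only cosmetic difference is that you invoke Corollary~\ref{cor:sym_Gelf_pair} up front for multiplicity-freeness, whereas the paper reads it off a posteriori from the explicit character decomposition.
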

\begin{proof} Take any irreducible character of the product group $\mathbb{Z}_k \wr \mathcal{S}_n \times \mathbb{Z}_k \wr \mathcal{S}_{n-1}$ which has the form $\chi^{\blambda}\times \chi^{\bdelta}$ where $\blambda$ is a $k$-partite partition of $n$ and $\bdelta$ is a $k$-partite partition of $n-1.$ By the Frobenius reciprocity theorem (see for example \cite[Theorem 1.12.6]{sagan2001symmetric}) we have:
\begin{equation}\label{hyp_frob}
\langle\ \chi^{\blambda}\times \chi^{\bdelta},\character \left(1_{\diag (\mathbb{Z}_k \wr \mathcal{S}_{n-1})}^{\mathbb{Z}_k \wr \mathcal{S}_n \times \mathbb{Z}_k \wr \mathcal{S}_{n-1}}\right)\rangle\ _{\mathbb{Z}_k \wr \mathcal{S}_n \times \mathbb{Z}_k \wr \mathcal{S}_{n-1}}
=\langle\ \Res\left(\chi^{\blambda}\times \chi^{\bdelta}\right),1\rangle\ _{\diag (\mathbb{Z}_k \wr \mathcal{S}_{n-1})}
\end{equation}
\begin{equation*}
=\frac{1}{|\mathbb{Z}_k \wr \mathcal{S}_{n-1}|}\sum_{x\in \mathbb{Z}_k \wr \mathcal{S}_{n-1}}\Res_{\mathbb{Z}_k \wr \mathcal{S}_{n-1}}^{\mathbb{Z}_k \wr \mathcal{S}_{n}}\chi^{\blambda}(x) \chi^{\bdelta}(x),
\end{equation*}
where $\Res_{\mathbb{Z}_k \wr \mathcal{S}_{n-1}}^{\mathbb{Z}_k \wr \mathcal{S}_{n}}\chi^{\blambda}$ denotes the restriction of the character $\chi^{\blambda}$ of $\mathbb{Z}_k \wr \mathcal{S}_{n}$ to $\mathbb{Z}_k \wr \mathcal{S}_{n-1}.$
But 
\begin{equation*}
\Res_{\mathbb{Z}_k \wr \mathcal{S}_{n-1}}^{\mathbb{Z}_k \wr \mathcal{S}_{n}}\chi^{\blambda}=\sum_{\bsigma\nearrow \blambda}\chi^{\bsigma},
\end{equation*}
as it is shown in \cite[Corollary 5.3]{stein2017littlewood}. Thus, using the orthogonality property of the characters of $\mathbb{Z}_k \wr \mathcal{S}_{n-1},$ Equation (\ref{hyp_frob}) becomes:
\begin{equation}
\langle\ \chi^{\blambda}\times \chi^{\bdelta},\character \left(1_{\diag (\mathbb{Z}_k \wr \mathcal{S}_{n-1})}^{\mathbb{Z}_k \wr \mathcal{S}_n \times \mathbb{Z}_k \wr \mathcal{S}_{n-1}}\right)\rangle\ _{\mathbb{Z}_k \wr \mathcal{S}_n \times \mathbb{Z}_k \wr \mathcal{S}_{n-1}}
=\begin{cases}
1 & \text{ if $\bdelta \nearrow \blambda$}\\
0 & \text{ otherwise.}
\end{cases}
\end{equation}
On the other hand, we have
\begin{equation}
\langle\ \chi^{\blambda}\times \chi^{\bdelta},\sum_{\brho \text{$k$-partite partition of $n,$} \atop{ \bsigma\nearrow \brho }}\chi^{\brho}\times \chi^{\bsigma}\rangle\ _{\mathbb{Z}_k \wr \mathcal{S}_n \times \mathbb{Z}_k \wr \mathcal{S}_{n-1}}
\end{equation}
\begin{equation*}
=\sum_{\brho \text{$k$-partite partition of $n,$} \atop{ \bsigma\nearrow \brho }}\langle\ \chi^{\blambda},\chi^{\brho}\rangle\ _{\mathbb{Z}_k \wr \mathcal{S}_{n}}\langle\  \chi^{\bdelta},\chi^{\bsigma}\rangle\ _{\mathbb{Z}_k \wr \mathcal{S}_{n-1}}
=\begin{cases}
1 & \text{ if $\bdelta \nearrow \blambda$}\\
0 & \text{ otherwise.}
\end{cases}
\end{equation*}
This ends the proof of this proposition.

\end{proof}

\begin{cor} The zonal spherical functions of the pair $(\mathbb{Z}_k \wr \mathcal{S}_n \times \mathbb{Z}_k \wr \mathcal{S}_{n-1}, \diag (\mathbb{Z}_k \wr \mathcal{S}_{n-1}) )$ are
\begin{equation*}
\omega^{\bsigma\nearrow \brho}(x,y)=\frac{1}{k^{n-1}(n-1)!}\sum_{h\in \mathbb{Z}_k \wr \mathcal{S}_{n-1}}\chi^{\brho}(xh)\chi^{\bsigma}(yh),
\end{equation*}
where $\bsigma$ is a $k$-partite partition of $n-1$ and $\brho$ is a $k$-partite partition of $n$ with $\bsigma\nearrow \brho.$
\end{cor}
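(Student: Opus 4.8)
The plan is to read off the zonal spherical functions from the general formula for pairs of the form $(G\times K,\diag(K))$ recorded in the subsection on $K$-generalized characters, and then to use the symmetry of the pair to pass from the inverses $x^{-1},y^{-1}$ to the elements $x,y$ themselves. First I would set $G=\mathbb{Z}_k\wr\mathcal{S}_n$ and $K=\mathbb{Z}_k\wr\mathcal{S}_{n-1}$, so that $|K|=k^{n-1}(n-1)!$. By the preceding Proposition the summands $X_i\times Y_j$ appearing in $1_{\diag(\mathbb{Z}_k\wr\mathcal{S}_{n-1})}^{\mathbb{Z}_k\wr\mathcal{S}_n\times\mathbb{Z}_k\wr\mathcal{S}_{n-1}}$ are precisely the $\chi^{\brho}\times\chi^{\bsigma}$ with $\bsigma\nearrow\brho$, where $\brho$ is a $k$-partite partition of $n$ and $\bsigma$ a $k$-partite partition of $n-1$; hence the zonal spherical functions are naturally indexed by such pairs, which justifies the notation $\omega^{\bsigma\nearrow\brho}$.

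Substituting $\mathcal{X}_i=\chi^{\brho}$, $\mathcal{Y}_j=\chi^{\bsigma}$ and $|K|=k^{n-1}(n-1)!$ into the general expression
\[
\omega_{i,j}(x,y)=\frac{1}{|K|}\sum_{h\in K}\mathcal{X}_i(x^{-1}h)\,\mathcal{Y}_j(y^{-1}h)
\]
would immediately give
\[
\omega^{\bsigma\nearrow\brho}(x,y)=\frac{1}{k^{n-1}(n-1)!}\sum_{h\in\mathbb{Z}_k\wr\mathcal{S}_{n-1}}\chi^{\brho}(x^{-1}h)\,\chi^{\bsigma}(y^{-1}h).
\]
The only discrepancy with the claimed formula is the presence of $x^{-1},y^{-1}$ in place of $x,y$, so the entire content of the statement reduces to justifying this replacement.

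For this I would invoke Corollary \ref{cor:sym_Gelf_pair}: since $(\mathbb{Z}_k\wr\mathcal{S}_n\times\mathbb{Z}_k\wr\mathcal{S}_{n-1},\diag(\mathbb{Z}_k\wr\mathcal{S}_{n-1}))$ is a symmetric Gelfand pair, every $(x,y)$ lies in the same $\diag(\mathbb{Z}_k\wr\mathcal{S}_{n-1})$-double coset as its inverse $(x,y)^{-1}=(x^{-1},y^{-1})$. Since the zonal spherical functions belong to $L(\diag(K)\backslash G\times K/\diag(K))$ and are therefore constant on double cosets, we obtain $\omega^{\bsigma\nearrow\brho}(x,y)=\omega^{\bsigma\nearrow\brho}(x^{-1},y^{-1})$, and evaluating the displayed formula at $(x^{-1},y^{-1})$ turns $x^{-1}h,\,y^{-1}h$ into $xh,\,yh$, which is exactly the asserted identity. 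The point to be careful about — and essentially the only genuine obstacle — is that this substitution is forced by the symmetry of the pair and not by any reality of the characters: a direct computation shows that $\overline{\omega^{\bsigma\nearrow\brho}(x,y)}=\omega^{\bsigma\nearrow\brho}(x^{-1},y^{-1})$ holds for the functions defined by the general formula, while the characters $\chi^{\brho}$ of $\mathbb{Z}_k\wr\mathcal{S}_n$ are in general complex-valued, so it is precisely Corollary \ref{cor:sym_Gelf_pair} that makes these functions real and invariant under inversion.
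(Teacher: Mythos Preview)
Your argument is correct and is precisely the elaboration the paper leaves to the reader: the corollary is stated there without proof, as an immediate consequence of the preceding proposition together with the general formula for zonal spherical functions of $(G\times K,\diag(K))$ recalled in Section~\ref{sec:symm_gel_pairs}. You have correctly identified the one point that is not a literal substitution, namely the passage from $x^{-1}h,\,y^{-1}h$ to $xh,\,yh$, and your resolution via Corollary~\ref{cor:sym_Gelf_pair} (symmetry of the pair forces $\omega(x,y)=\omega(x^{-1},y^{-1})$) is exactly the right one; your remark that for $k\geq 3$ the characters are genuinely complex, so that reality of $\omega$ is not automatic but is a consequence of symmetry, is a useful observation that the paper does not make explicit.
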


The $\mathbb{Z}_k \wr \mathcal{S}_{n-1}$-generalized characters of $\mathbb{Z}_k \wr \mathcal{S}_{n}$ associated to $\bsigma\nearrow \brho,$ where $\brho$ is a $k$-partite partition of $n,$ is defined by
\begin{equation}\label{gen_char_hyp}
\chi^{\bsigma\nearrow \brho}(x)=\chi^{\bsigma}(1)\omega^{\bsigma\nearrow \brho}(x,1)= \frac{\chi^{\bsigma}(1)}{k^{n-1}(n-1)!}\sum_{h\in \mathbb{Z}_k \wr \mathcal{S}_{n-1}}\chi^{\brho}(xh)\chi^{\bsigma}(h) 
\end{equation}
for any $x\in \mathbb{Z}_k \wr \mathcal{S}_{n}.$ 

The zonal spherical functions of the Gelfand pair $(\mathbb{Z}_k \wr \mathcal{S}_n \times \mathbb{Z}_k \wr \mathcal{S}_{n-1}, \diag (\mathbb{Z}_k \wr \mathcal{S}_{n-1}) )$ form an orthogonal basis for $L(\diag (\mathbb{Z}_k \wr \mathcal{S}_{n-1})\backslash \mathbb{Z}_k \wr \mathcal{S}_{n}\times\mathbb{Z}_k \wr \mathcal{S}_{n-1}/\diag (\mathbb{Z}_k \wr \mathcal{S}_{n-1})),$ the algebra of all complex-valued functions on $\mathbb{Z}_k \wr \mathcal{S}_{n}\times\mathbb{Z}_k \wr \mathcal{S}_{n-1}$ that are constant on the double-classes $\diag (\mathbb{Z}_k \wr \mathcal{S}_{n-1}) x\diag (\mathbb{Z}_k \wr \mathcal{S}_{n-1})$ of $\mathbb{Z}_k \wr \mathcal{S}_{n}\times\mathbb{Z}_k \wr \mathcal{S}_{n-1}.$ The list of basic important properties of zonal spherical functions can be found in \cite[page 389]{McDo}. As the $\mathbb{Z}_k \wr \mathcal{S}_{n-1}$-generalized characters of $\mathbb{Z}_k \wr \mathcal{S}_{n}$ are closely related, by their definition, to the zonal spherical functions of $(\mathbb{Z}_k \wr \mathcal{S}_{n}\times\mathbb{Z}_k \wr \mathcal{S}_{n-1}, \diag (\mathbb{Z}_k \wr \mathcal{S}_{n-1}) ),$ we list below their properties that can be obtained directly from those of the zonal spherical functions:
\begin{enumerate}
\item[•] They form an orthogonal basis for the algebra $C(\mathbb{Z}_k \wr \mathcal{S}_{n}\times\mathbb{Z}_k \wr \mathcal{S}_{n-1},\diag (\mathbb{Z}_k \wr \mathcal{S}_{n-1}))$ of all complex-valued functions on $G$ that are constant on the $\diag (\mathbb{Z}_k \wr \mathcal{S}_{n-1})-$ conjugacy classes of $\mathbb{Z}_k \wr \mathcal{S}_{n}\times\mathbb{Z}_k \wr \mathcal{S}_{n-1}.$
\item[•] $\chi^{\bsigma\nearrow \brho}(1)=\chi^{\bsigma}(1).$
\item[•] $\chi^{\bsigma\nearrow \brho}(yxy^{-1})=\chi^{\bsigma\nearrow \brho}(x)$ for all $x\in \mathbb{Z}_k \wr \mathcal{S}_{n},$ $y\in \mathbb{Z}_k \wr \mathcal{S}_{n-1}.$
\item[•] $\langle \chi^{\bmu\nearrow \blambda},\chi^{\bnu\nearrow \brho}\rangle_{\mathbb{Z}_k \wr \mathcal{S}_{n}}=\frac{\chi^{\bmu}(1)}{\chi^{\blambda}(1)}\delta^{\blambda\brho}\delta^{\bmu\bnu}.$
\end{enumerate}

When $k=1,$ $\mathbb{Z}_k \wr \mathcal{S}_{n}$ is isomorphic to $\mathcal{S}_{n}$ and tables of the generalized characters of $\mathcal{S}_{3}$ and $\mathcal{S}_{4}$ can be found in \cite[page 118]{strahov2007generalized}. As it seems to us that the tables entries in \cite[page 118]{strahov2007generalized} are not totally correct, we reproduce them below 
\begin{table}[h]
  \[
  \begin{array}{|*{5}{c|}}
    \hline
&   \boldmath C_{(3^*)} & \boldmath C_{(2,1^*)} & \boldmath C_{(2^*,1)} & \boldmath C_{(1,1,1^*)} \\
    \hline
\text{Order} &   2 & 1 & 2 & 1  \\
	\hline
\text{Elements} &    (123),(132) &  (12)(3)  &  (13)(2),(23)(1) &  (1)(2)(3)  \\
	\hline
\chi^{(3^*)}    &   1 & 1 & 1 & 1  \\
    \hline
\chi^{(2,1^*)}    &   -\frac{1}{2} & 1 & -\frac{1}{2} & 1  \\
    \hline
\chi^{(2^*,1)}    &   -\frac{1}{2} & -1 & \frac{1}{2} & 1  \\
    \hline
\chi^{(1,1,1^*)}    &   1 & -1 & -1 & 1  \\
	\hline
  \end{array}
  \]
  \caption{$\mathcal{S}_2$-generalized characters of $\mathcal{S}_3$}
  \label{S2 gen char tab}
\end{table}

\begin{table}[h]
  \[
  \begin{array}{|*{8}{c|}}
    \hline
&   \boldmath C_{(4^*)} & \boldmath C_{(3,1^*)} & \boldmath C_{(3^*,1)} & \boldmath C_{(2,2^*)} & \boldmath C_{(2,1,1^*)} & \boldmath C_{(2^*,1,1)} & \boldmath C_{(1,1,1,1^*)}\\
    \hline
\text{Order} &   6 & 2 & 6 & 3 & 3 & 3 & 1 \\
	\hline
\chi^{(4^*)}    &   1 & 1 & 1 & 1 & 1 & 1 & 1 \\
    \hline
\chi^{(3,1^*)}    &   -\frac{1}{3} & 1 & -\frac{1}{3} & -\frac{1}{3} & 1 & -\frac{1}{3} & 1 \\
    \hline
\chi^{(3^*,1)}    &   -\frac{2}{3} & -1 & \frac{1}{3} & -\frac{2}{3} & 0 & \frac{4}{3} & 2 \\
    \hline
\chi^{(2,2^*)}    &   0 & -1 & -1 & 2 & 0 & 0 & 2 \\
    \hline
\chi^{(2,1,1^*)}    &   \frac{2}{3} & -1 & \frac{1}{3} & -\frac{2}{3} & 0 & -\frac{4}{3} & 2 \\
    \hline
\chi^{(2^*,1,1)}    &   \frac{1}{3} & 1 & -\frac{1}{3} & -\frac{1}{3} & -1 & \frac{1}{3} & 1 \\
    \hline
\chi^{(1,1,1,1^*)}    &   -1 & 1 & 1 & 1 & -1 & -1 & 1 \\
    \hline
  \end{array}
  \]
  \caption{$\mathcal{S}_3$-generalized characters of $\mathcal{S}_4$}
  \label{S3 gen char tab}
\end{table}

When $k=2,$ $\mathbb{Z}_k \wr \mathcal{S}_{n}$ is isomorphic to the Hyperoctahedral group $\mathcal{H}_n,$ and the generalized characters in this case were first defined in \cite{tout2021symmetric}. As we will focus on the Hyperoctahedral groups in our examples, we refer to \cite[Pages 51, 52 and 53]{islami2021symmetric} for the character tables of $\mathcal{H}_1,$ $\mathcal{H}_2$ and $\mathcal{H}_3.$ For example, by definition $\chi^{((1^*),(1))}((13)(24))$ equals
\[
\frac{\chi^{(\emptyset,(1))}(1)}{2^{2-1}(2-1)!}\big(\chi^{((1),(1))}((13)(24))\chi^{(\emptyset,(1))}((1)(2)) + \chi^{((1),(1))}((13)(24)(12))\chi^{(\emptyset,(1))}((12)) \big)
\]
\[
=\frac{1}{2}\big(0.1 + 0.-1 \big)=0.
\]
Similarly,
\[
\chi^{(\emptyset,(2^\star))}((12)(3)(4))=\frac{1}{2}\big(-1.1 + 1.-1 \big)=-1.
\]

We use the character tables of $\mathcal{H}_1$ and $\mathcal{H}_2$ along with the above properties to produce, in Table \ref{H1 gen char tab}, the $\mathcal{H}_1$-generalized characters of $\mathcal{H}_2.$

\begin{table}[h]
  \[
  \begin{array}{|*{7}{c|}}
    \hline
&   \boldmath C_{((1,1^*),\emptyset)} & \boldmath C_{((2^*),\emptyset)} & \boldmath C_{((1^*),(1))} & \boldmath C_{((1),(1^*))} & \boldmath C_{(\emptyset,(1,1^*))} & \boldmath C_{(\emptyset,(2^*))} \\
    \hline
\text{Order} &   1 & 2 & 1 & 1 & 1 & 2 \\
	\hline
\text{Elements} &    (1)(2)(3)(4) &  (13)(24),(14)(23)  &  (12)(3)(4) &  (1)(2)(34) &  (12)(34) &  (1324),(1423) \\
	\hline
\chi^{((2^*),\emptyset})    &   1 & 1 & 1 & 1 & 1 & 1 \\
    \hline
\chi^{((1,1^*),\emptyset)}    &   1 & -1 & 1 & 1 & 1 & -1 \\
    \hline
\chi^{((1^*),(1))}    &   1 & 0 & -1 & 1 & -1 & 0 \\
    \hline
\chi^{((1),(1^*))}    &   1 & 0 & 1 & -1 & -1 & 0 \\
    \hline
\chi^{(\emptyset,(2^*))}    &   1 & 1 & -1 & -1 & 1 & -1 \\
    \hline
\chi^{(\emptyset,(1,1^*))}    &   1 & -1 & -1 & -1 & 1 & 1 \\
    \hline
  \end{array}
  \]
  \caption{$\mathcal{H}_1$-generalized characters of $\mathcal{H}_2$}
  \label{H1 gen char tab}
\end{table}

\section{Murnaghan-Nakayama rule for the generalized characters of the Hyperoctahedral group}

Let $\blambda$ be a $2$-partite partition, also called bipartition, of $n.$ We denote by $\chi^{\blambda}$ its corresponding irreducible character of $\mathbb{Z}_2\wr \mathcal{S}_n.$ A bipartite rim hook tableau of shape $\blambda$ is a sequence
\[
\emptyset=\blambda^{(0)}\subseteq \blambda^{(1)}\subseteq\cdots\subseteq\blambda^{(t)}=\blambda
\] 
of bipartite partitions such that each consecutive difference ${\bf rh_i}:=\blambda^{(i)}/\blambda^{(i-1)}$ $(1 \leq i \leq t),$ has exactly one empty part and one nonempty part $rh_i$ which is a rim hook. It would be useful for a bipartite partition $\brho$ of $n$ to define $\Gather(\brho):=(\rho^1,\cdots ,\rho^k)$ to be the partition of $n$ formed using the parts of the component partitions of $\brho.$ For example, if $\brho=((2,1),(2))$ then $\Gather(\brho)=(2,2,1).$

\begin{prop}\label{Murnaghan_Nakayama_rule}(Murnaghan-Nakayama rule for the characters of $\mathbb{Z}_2\wr \mathcal{S}_n$) If $\blambda$ and $\brho$ are two bipartite partitions of $n$ with $\Gather(\brho)=(\rho^1,\cdots,\rho^m)$ then
\[
\chi^{\blambda}_{\brho}=\sum_{S} \prod_{i=1}^{m}(-1)^{\height(rh_i)}.(-1)^{f(i).z(\rho^i)}
\]
where the sum is taken over all sequences $S$ of bipartite rim hook tableau of shape $\blambda$ such that $l(rh_i)=\rho^i$ for all $i;$ $f(i) \in \lbrace 0, 1\rbrace$ is the index of the nonempty part $rh_i$ of ${\bf rh_i}$ and $z(\rho^i)$ is the index of the component partition of $\brho$ from which $\rho^i$ is taken.
\end{prop}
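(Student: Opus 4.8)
The plan is to prove the statement through the Frobenius characteristic map for the wreath product $\mathbb{Z}_2\wr\mathcal{S}_n$, following the theory of \cite[Appendix B to Chapter I]{McDo}. Write $\gamma_0$ and $\gamma_1$ for the trivial and the sign characters of $\mathbb{Z}_2=\{0,1\}$, so that $\gamma_1(c)=(-1)^c$ on the class $c\in\{0,1\}$. First I would recall that the irreducible characters of $\mathbb{Z}_2\wr\mathcal{S}_n$ are indexed by bipartitions, and that under the characteristic map valued in $\Lambda\otimes\Lambda$ (one tensor slot per irreducible character of $\mathbb{Z}_2$, component $\lambda_0$ attached to $\gamma_0$ and $\lambda_1$ to $\gamma_1$) the character $\chi^{\blambda}$ with $\blambda=(\lambda_0,\lambda_1)$ has characteristic $s_{\lambda_0}\otimes s_{\lambda_1}$. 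The key identity to set up is that the value on the class of type $\brho$ is recovered as
\[
\chi^{\blambda}_{\brho}=\big\langle\, s_{\lambda_0}\otimes s_{\lambda_1},\ \Psi(\brho)\,\big\rangle,\qquad \Psi(\brho)=\prod_{i}\big((p_{r_i}\otimes 1)+(-1)^{c_i}(1\otimes p_{r_i})\big),
\]
where the product runs over the cycles of $\brho$, the cycle $i$ having length $r_i$ and cycle-sum $c_i$, and the bracket is the product of the two Hall inner products on $\Lambda$. This is exactly the translation into symmetric functions of the change of basis between the conjugacy-class power sums and the irreducible-character power sums of $\mathbb{Z}_2$, the coefficient $(-1)^{c_i}=\gamma_1(c_i)$ being the value of the sign character.

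Next I would expand $\Psi(\brho)$ over the $2^m$ ways of selecting, in each factor, either the first summand (slot $0$) or the second summand (slot $1$); writing $f(i)\in\{0,1\}$ for the slot chosen for cycle $i$, this produces the sign $\prod_i(-1)^{f(i)c_i}$ and the tensor $p_{B_0}\otimes p_{B_1}$, where $B_s$ is the multiset of lengths $r_i$ with $f(i)=s$. Since the bracket factorizes over the two tensor slots and $\langle s_\mu,p_\nu\rangle=\chi^\mu_\nu$ is the ordinary symmetric-group character value, this gives
\[
\chi^{\blambda}_{\brho}=\sum_{f}\Big(\prod_i(-1)^{f(i)c_i}\Big)\,\chi^{\lambda_0}_{B_0}\,\chi^{\lambda_1}_{B_1}.
\]
I would then apply the classical Murnaghan--Nakayama rule to each factor $\chi^{\lambda_s}_{B_s}$, expanding it as a sum over rim hook tableaux of shape $\lambda_s$ whose successive border strips have the sizes prescribed by $B_s$, each tableau contributing $\prod(-1)^{\height}$.

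Finally I would observe that the combined combinatorial data — the slot assignment $f$ together with a rim hook tableau in each component — is precisely a sequence $S$ of bipartite rim hook tableaux of shape $\blambda$: at the $i$-th step a border strip of length $\rho^i$ is removed from component $f(i)$, while the index $z(\rho^i)=c_i$ records which class of $\brho$ the length $\rho^i$ comes from. Collecting the two sources of signs produces the factor $(-1)^{\height(rh_i)}(-1)^{f(i)z(\rho^i)}$ for each step, and summing over all such $S$ yields the asserted formula.

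The step I expect to be the main obstacle is the careful justification of the first displayed identity for $\chi^{\blambda}_{\brho}$ in terms of $\Psi(\brho)$: one must pin down the normalizations so that no spurious factor (such as the centralizer order $\xi_c=2$ or a power of $\frac{1}{2}$) survives. Concretely this amounts to verifying that $\langle\Psi(\brho),\Psi(\bsigma)\rangle=Z_{\brho}\,\delta_{\brho\bsigma}$ and that the characteristic map is the corresponding isometry, which can be checked directly on the power-sum basis; a small case such as $\brho=(\emptyset,(1))$ in $\mathbb{Z}_2\wr\mathcal{S}_1$, where $\Psi(\brho)=p_1\otimes 1-1\otimes p_1$ and the bracket with $1\otimes p_1$ returns $-1=\gamma_1(1)$, already fixes the normalization. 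Once this bookkeeping is settled the remainder is a direct unwinding of the classical rule.
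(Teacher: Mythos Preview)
The paper does not actually supply a proof of this proposition: it is stated as the (well-known) Murnaghan--Nakayama rule for $\mathbb{Z}_2\wr\mathcal{S}_n$ and is followed immediately by worked examples, with no \texttt{proof} environment. So there is no ``paper's own proof'' to compare against; the result is being quoted as background.

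Your proposal is the standard derivation via Macdonald's characteristic map for wreath products and is correct in outline. The identity $p_r(c)=p_r(\gamma_0)+(-1)^c\,p_r(\gamma_1)$ is exactly Macdonald's change of basis between class-indexed and character-indexed power sums for $G=\mathbb{Z}_2$ (all character values being real, no conjugation issue arises), so your $\Psi(\brho)$ is right on the nose and the normalization worry you flag does indeed evaporate once one checks $\langle P_\Lambda,P_M\rangle=Z_\Lambda\delta_{\Lambda M}$. The only point worth tightening is the interleaving step: when you pass from $\chi^{\lambda_0}_{B_0}\chi^{\lambda_1}_{B_1}$ (each expanded by the classical rule) back to a single sequence $S$ removing strips in the order $\rho^1,\ldots,\rho^m$, you are implicitly using that the classical Murnaghan--Nakayama expansion does not depend on the chosen ordering of the parts, so that within each slot you may remove the strips in the order inherited from $\Gather(\brho)$. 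With that remark made explicit, the bijection between pairs $(f,\text{two rim-hook tableaux})$ and bipartite rim-hook tableaux $S$ is clean, and the signs assemble as claimed.
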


\begin{ex} to compute $\chi^{((1^2);(1))}_{((1^2);(1))},$ the following three sequences of $2$-partite rim hook tableau of shape $ ((1^2),(1))$ should be considered 
\[S_1=\emptyset\subseteq ((1);\emptyset)\subseteq ((1^2);\emptyset)\subseteq  ((1^2);(1)),\]
\[S_2=\emptyset\subseteq ((1);\emptyset)\subseteq ((1);(1))\subseteq  ((1^2);(1)),\]
\[S_3=\emptyset\subseteq (\emptyset;(1))\subseteq (\emptyset;(1^2))\subseteq  ((1^2);(1))\]
These sequences will contribute to the Murnaghan-Nakayama rule respectively by the following amounts:
\[(-1)^0.(-1)^{0.0}\times (-1)^0.(-1)^{0.0}\times (-1)^0.(-1)^{1.1}=-1,\]
\[(-1)^0.(-1)^{0.0}\times (-1)^0.(-1)^{1.0}\times (-1)^0.(-1)^{0.1}=1,\]
\[(-1)^0.(-1)^{1.0}\times (-1)^0.(-1)^{0.0}\times (-1)^0.(-1)^{0.1}=1.\]
Thus,
\[ \chi^{((1^2);(1))}_{((1^2);(1))}=-1+1+1=1.\]
Using the same sequences, one can verify that \[ \chi^{((1^2);(1))}_{(\emptyset;(1^3))}=-1-1-1=-3.\]
\end{ex}

\begin{ex} By the Murnaghan-Nakayama rule we have, 
\begin{eqnarray*}
\chi^{((1,1),(2,2))}_{((2),(2,2))}&=&(-1)^{1+0.0}\chi^{(\emptyset,(2,2))}_{(\emptyset,(2,2))}+(-1)^{1+1.0}\chi^{((1,1),(1,1))}_{(\emptyset,(2,2))}+(-1)^{0+1.0}\chi^{((1,1),(2))}_{(\emptyset,(2,2))}\\
&=&-\chi^{(\emptyset,(2,2))}_{(\emptyset,(2,2))}-\chi^{((1,1),(1,1))}_{(\emptyset,(2,2))}+\chi^{((1,1),(2))}_{(\emptyset,(2,2))}.
\end{eqnarray*}
In order to obtain the exact value for $\chi^{((1,1),(2,2))}_{((2),(2,2))},$ we can apply once again the Murnaghan-Nakayama rule to obtain the values of $\chi^{(\emptyset,(2,2))}_{(\emptyset,(2,2))},$ $\chi^{((1,1),(1,1))}_{(\emptyset,(2,2))}$ and $\chi^{((1,1),(2))}_{(\emptyset,(2,2))}.$ All the recurrence steps are recorded in the following diagram
\[\begin{tikzpicture}
\node(1) at (0,0) {\ydiagram{1,1} \,\,\,\,\,\,\,\, \ydiagram{2,2}};
\node(2) at (-4,-3) {$\emptyset$ \,\,\,\,\,\,\,\, \ydiagram{2,2}};
\node(3) at (0,-3) {\ydiagram{1,1} \,\,\,\,\,\,\,\, \ydiagram{1,1}};
\node(4) at (4,-3) {\ydiagram{1,1} \,\,\,\,\,\,\,\, \ydiagram{2}};
\node(5) at (-7,-6) {$\emptyset$ \,\,\,\,\, \ydiagram{1,1}};
\node(6) at (-4,-6) {$\emptyset$ \,\,\,\,\, \ydiagram{2}};
\node(7) at (-1,-6) {$\emptyset$ \,\,\,\,\, \ydiagram{1,1}};
\node(8) at (2,-6) {\ydiagram{1,1} \,\,\,\,\, $\emptyset$};
\node(9) at (5,-6) {$\emptyset$ \,\,\,\,\, \ydiagram{2}};
\node(10) at (8,-6) {\ydiagram{1,1} \,\,\,\,\, $\emptyset$};
\node(11) at (-7,-9) {$\emptyset$ \,\,\,\,\,\,\,\, $\emptyset$};
\node(12) at (-4,-9) {$\emptyset$ \,\,\,\,\,\,\,\, $\emptyset$};
\node(13) at (-1,-9) {$\emptyset$ \,\,\,\,\,\,\,\, $\emptyset$};
\node(14) at (2,-9) {$\emptyset$ \,\,\,\,\,\,\,\, $\emptyset$};
\node(15) at (5,-9) {$\emptyset$ \,\,\,\,\,\,\,\, $\emptyset$};
\node(16) at (8,-9) {$\emptyset$ \,\,\,\,\,\,\,\, $\emptyset$};
\draw (1) -- node[above] {-1} (2) ;
\draw (1) -- node[left] {-1}  (3);
\draw (1) -- node[above] {1} (4);
\draw (2) -- node[above] {1} (5);
\draw (2) -- node[right] {-1} (6);
\draw (3) -- node[left] {-1} (7);
\draw (3) -- node[right] {1} (8);
\draw (4) -- node[left] {-1} (9);
\draw (4) -- node[right] {-1} (10);
\draw (5) -- node[left] {1} (11);
\draw (6) -- node[right] {-1} (12);
\draw (7) -- node[left] {1} (13);
\draw (8) -- node[right] {-1} (14);
\draw (9) -- node[left] {-1} (15);
\draw (10) -- node[right] {-1} (16);
\end{tikzpicture}\]
To obtain the value of $\chi^{((1,1),(2,2))}_{((2),(2,2))},$ one should sum up the numbers obtained by multiplying the integers along each path from the root to an and. Thus, $\chi^{((1,1),(2,2))}_{((2),(2,2))}=-1-1+1+1+1+1=2.$
\end{ex}

We define the \textit{content} of a box $b$ which is in position $(i,j)$ of the $t^{th}$ index of a bipartite partition of $n$ by $(-1)^t (j-i).$ For example, the content of the boxes $b_1$ and $b_2$ in the following bipartite partition of $7$

\[\left(
\tiny \ytableaushort{
{}{b_1}{},
{}
},
\ytableaushort{
{}{b_2},
{}
}
\right)
\]
are $(-1)^{0} (2-1)$ and $(-1)^{1} (2-1)$ respectively.  

A skew Young diagram $\lambda/\delta$ is a \textit{broken border strip} if it does not contain any $2\times 2$ block of cells. The reader should notice that the difference between a border strip and a broken border strip is that the first should be connected but the second may not be connected. In a broken border strip, we define a \textit{sharp corner} to be a box with a box below it and a box to its right, and a \textit{dull corner} to be a box with no box below it and no box to its right. For example, if $\lambda=(9,6,6,5,4,1,1)$ and $\delta=(6,6,4,4)$ then $\lambda/\delta$ is a broken border strip with three connected components, two sharp corners (marked with $s$) and five dull boxes (marked with $d$) as shown below 
\[
\ytableaushort{
\none\none\none\none\none\none {}{}d,
\none\none\none\none\none\none,
\none\none\none\none s d,
\none\none\none\none d,
s {}{} d,
{},
d}
\]

We denote by $\SC(\lambda/\delta)$ the set of sharp corners of $\lambda/\delta$ and by $\DB(\lambda/\delta)$ its set of dull boxes. We say that $\blambda/\bdelta$ is a broken border strip if one of its constituents is a broken border strip and the other are $\emptyset.$ $\SC(\blambda/\bdelta)$ and $\DB(\blambda/\bdelta)$ are defined to be respectively the set of sharp corners and the set of dull boxes of its nonempty constituent.

\begin{prop}\label{Murnaghan_Nakayama_rule_hyperoctahedral}(Murnaghan-Nakayama rule for the generalized characters of Hyperoctahedral group) If $\bmu\nearrow \blambda$ and $\bdelta \nearrow \brho$ are two marked bipartitions of $n$ and if $j$ is the part of $\brho$ in which the marked box $\brho / \bdelta$ lies then
\[
\chi^{\bmu\nearrow \blambda}_{\bdelta \nearrow \brho}=\sum_{\bnu\subseteq \bmu \atop{ \bnu\vdash n-j}}c_{\bmu,\blambda,\bdelta,\brho,\bnu} \chi^{\bnu}_{\brho- (j)}
\]
where 
\[
c_{\bmu,\blambda,\bdelta,\brho,\bnu}=
\begin{cases}
(-1)^{\height(\blambda/\bnu)+P(\blambda/\bnu).P(j)} \frac{\displaystyle \prod_{s\in \SC(\blambda/\bnu)} \left[ c(\blambda/\bmu)-c(s)\right]}{\displaystyle \prod_{d\in \DB(\blambda/\bnu), \atop{d\neq \blambda/\bmu}} \left[ c(\blambda/\bmu)-c(d)\right]} & \text{ if $\blambda/\bnu$ is a broken border strip,}\\
0 & \text{ otherwise.}
\end{cases}
\]
\end{prop}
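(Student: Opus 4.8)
The plan is to pass from the convolution definition \eqref{gen_char_hyp} to a block trace, and then to evaluate that trace by combining the ordinary Murnaghan--Nakayama rule of Proposition \ref{Murnaghan_Nakayama_rule} with the spectral data of the Jucys--Murphy element. Write $R_{\blambda}\colon\mathbb{Z}_2\wr\mathcal{S}_n\to\operatorname{GL}(V^{\blambda})$ for the irreducible representation affording $\chi^{\blambda}$. Since Corollary \ref{cor:sym_Gelf_pair} guarantees that $\mathbb{Z}_2\wr\mathcal{S}_{n-1}$ is a multiplicity-free subgroup, the restriction decomposes as $V^{\blambda}\downarrow\mathbb{Z}_2\wr\mathcal{S}_{n-1}=\bigoplus_{\bmu\nearrow\blambda}V^{\bmu}$ with each summand occurring once; let $P_{\bmu}$ be the projection onto the copy of $V^{\bmu}$. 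Using that the characters of the hyperoctahedral group are real and applying Schur orthogonality over $\mathbb{Z}_2\wr\mathcal{S}_{n-1}$, the defining sum collapses to
\[
\chi^{\bmu\nearrow\blambda}(x)=\tr_{V^{\blambda}}\big(R_{\blambda}(x)\,P_{\bmu}\big),
\]
so the generalized character is simply the trace of $x$ read off the diagonal block of $V^{\blambda}$ indexed by the marked box $\blambda/\bmu$; summing over $\bmu\nearrow\blambda$ returns $\chi^{\blambda}$, as it must.

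The next step is to realize $P_{\bmu}$ spectrally. I would use the two elements of $\field[\mathbb{Z}_2\wr\mathcal{S}_n]$ that commute with $\mathbb{Z}_2\wr\mathcal{S}_{n-1}$: the Jucys--Murphy element $X_n$ (the signed sum of the transpositions moving $n$) and the color element $z_n$ (the generator of the last copy of $\mathbb{Z}_2$). By Schur's lemma each acts on a block $V^{\bmu}$ by a scalar, namely the content $c(\blambda/\bmu)$ and the sign $(-1)^{P(\blambda/\bmu)}$ of the marked box. A box in component $0$ and a box in component $1$ can share a content, so $X_n$ alone does not separate the blocks; but within a fixed component distinct addable boxes have distinct contents, so the joint eigenvalue $\big(c(\blambda/\bmu),(-1)^{P(\blambda/\bmu)}\big)$ does separate them. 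Hence $P_{\bmu}$ is the product of the color projector onto the component $P(\blambda/\bmu)$ with a Lagrange interpolation polynomial in $X_n$.

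With this in hand I would choose a representative $x=c\cdot x'$ of the class $\bdelta\nearrow\brho$, where $c$ is the marked $j$-cycle through $n$ of index $P(j)$ and $x'\in\mathbb{Z}_2\wr\mathcal{S}_{n-j}$ carries the residual type $\brho-(j)$. As $x'$ commutes with $P_{\bmu}$, $X_n$ and $z_n$, one feeds the polynomial form of $P_{\bmu}$ into the trace and expands $X_n^m$ as a signed sum of products of transpositions through $n$. Applying Proposition \ref{Murnaghan_Nakayama_rule} to every resulting $\chi^{\blambda}(x\cdot(\text{transpositions}))$ converts each content-weighted trace $\tr\big(R_{\blambda}(x)X_n^m\big)$, with the component selected by $z_n$, into a sum of border-strip removals of total size $j$ from $\blambda$; the transpositions beyond a single connected $j$-cycle are exactly what disconnect the strip, so the outcome is the removal of a \emph{broken} border strip $\blambda/\bnu$ through the marked box, leaving the factor $\chi^{\bnu}(x')=\chi^{\bnu}_{\brho-(j)}$ with $\bnu\subseteq\bmu$ and $\bnu\vdash n-j$. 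The MN height signs together with the index interaction of $c$ with the component of the strip assemble the prefactor $(-1)^{\height(\blambda/\bnu)+P(\blambda/\bnu)P(j)}$, and the whole contribution vanishes unless $\blambda/\bnu$ is a broken border strip, reproducing the two cases of the statement.

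The hard part will be the purely combinatorial identity of the last step: after inverting the Vandermonde system coming from the distinct contents, one must show that the content factors attached to the boxes of the broken border strip telescope into
\[
\frac{\displaystyle\prod_{s\in\SC(\blambda/\bnu)}\big[c(\blambda/\bmu)-c(s)\big]}{\displaystyle\prod_{d\in\DB(\blambda/\bnu),\,d\neq\blambda/\bmu}\big[c(\blambda/\bmu)-c(d)\big]},
\]
so that only the sharp corners survive in the numerator and the dull boxes other than the marked one in the denominator. Tracking how each turn of the strip at a sharp or a dull corner produces exactly one factor $c(\blambda/\bmu)-c(\cdot)$, and checking that the $\mathbb{Z}_2$-color bookkeeping contributes precisely the extra sign $P(\blambda/\bnu)P(j)$, is where essentially all the effort lies; by contrast the representation-theoretic scaffolding of the first three steps is routine once the Gelfand-pair and branching facts are granted.
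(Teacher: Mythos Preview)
Your approach is viable but differs from the paper's. The paper does not build any Jucys--Murphy or Lagrange-interpolation machinery; instead it simply imports Strahov's results wholesale. Concretely, the paper invokes \cite[Proposition 4.3.1]{strahov2007generalized} to write
\[
\chi^{\bmu\nearrow \blambda}_{\bdelta \nearrow \brho}=\sum_{S}\bigtriangleup(\bmu^{(1)})\,\bigtriangleup(\bmu^{(2)}/\bmu^{(1)})\cdots\bigtriangleup(\bmu/\bmu^{(k-1)};\blambda/\bmu^{(k-1)}),
\]
where each $\bigtriangleup$ is a sum over standard Young tableaux weighted by inverse consecutive content differences, and then evaluates these $\bigtriangleup$ factors by quoting Strahov's Theorems 4.4.1, 4.5.1 and Proposition 4.6.2. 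So the paper's ``proof'' is essentially the observation that Strahov's symmetric-group argument goes through verbatim once one works componentwise on bipartitions and inserts the extra sign $(-1)^{P(\blambda/\bnu)\cdot P(j)}$ coming from the colour grading in Proposition~\ref{Murnaghan_Nakayama_rule}.

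Your route --- block trace via $P_{\bmu}$, spectral realisation of $P_{\bmu}$ through $(X_n,z_n)$, expansion and repeated application of the ordinary MN rule --- is really the conceptual engine \emph{behind} Strahov's $\bigtriangleup$-formulas: the content-difference denominators in his sums over SYT are exactly the Lagrange/Vandermonde coefficients your interpolation produces, and his Theorems 4.4.1 and 4.5.1 are precisely the ``telescoping to sharp/dull corners'' identity you flag as the hard part. What you gain is self-containment and a clear explanation of where the formula comes from; what the paper gains is brevity, since the sharp/dull-corner identity is already proved in \cite{strahov2007generalized} and need not be redone for bipartitions. Note that you do not actually avoid that identity --- both arguments bottom out in the same rational-function computation over a broken border strip --- so if you intend a genuinely independent proof you will still have to carry out Strahov's Theorem 4.5.1/Proposition 4.6.2 (or an equivalent) in full.
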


\begin{proof} The proof is similar to the one given in \cite{strahov2007generalized} for the Murnaghan-Nakayama rule of the geberalized characters of the symmetric group. By \cite[Proposition 4.3.1]{strahov2007generalized}, we have
\begin{equation}
\chi^{\bmu\nearrow \blambda}_{\bdelta \nearrow \brho}=\sum_{S}\bigtriangleup(\bmu^{(1)})\bigtriangleup(\bmu^{(2)}/\bmu^{(1)})\cdots \bigtriangleup(\bmu^{(k-1)}/\bmu^{(k-2)})\bigtriangleup(\bmu/\bmu^{(k-1)};\blambda/\bmu^{(k-1)}),
\end{equation}
where the sum is over all sequences $\emptyset=\bmu^{(0)}\subseteq \bmu^{(1)}\subseteq\cdots\subseteq\bmu^{(k)}=\bmu$ of $k$-partite rim hook tableau of shape $\bmu$ such that $l(rh_i):=|\bmu^{(i)}/\bmu^{(i-1)}|=\rho^i\neq j,$ for all $1\leq i\leq k-1;$ and $|\bmu^{(k)}/\bmu^{(k-1)}|= j-1,$ with
\begin{equation}
\bigtriangleup(\bmu^{(i)}/\bmu^{(i-1)}):=\sum_{T\in SYT(\bmu^{(i)}/\bmu^{(i-1)})}\frac{1}{(c_T(2)-c_T(1))\cdots (c_T(\rho^i)-c_T(\rho^i-1))}
\end{equation}
and
\[
\bigtriangleup(\bmu/\bmu^{(k-1)};\blambda/\bmu^{(k-1)}):=
\]
\begin{equation}
\sum_{T\in SYT(\bmu/\bmu^{(k-1)})}\frac{1}{(c_T(2)-c_T(1))\cdots (c_T(j-1)-c_T(j-2))(c_T(\blambda/\bmu)-c_T(j-1))}.
\end{equation}
To obtain the Murnaghan-Nakayama rule, we can compute the terms $\bigtriangleup(\bmu^{(i)}/\bmu^{(i-1)})$ by using \cite[Theorem 4.4.1]{strahov2007generalized}. In addition, the remaining term $\bigtriangleup(\bmu/\bmu^{(k-1)};\blambda/\bmu^{(k-1)})$ can be computed using Theorem 4.5.1 and Proposition 4.6.2 of \cite{strahov2007generalized}. 

\end{proof}

\begin{ex} Suppose $n=2,$ $\bmu=((1),\emptyset),$ $\blambda=((1,1),\emptyset),$ $\bdelta=((1),\emptyset)$ and $\brho=((2),\emptyset).$ Then, $j=2$ and $(\emptyset,\emptyset)$ is the only $\bnu$ to contribute to the summation in the above rule. Therefore, $\chi^{\bmu\nearrow \blambda}_{\bdelta \nearrow \brho}=c_{\bmu,\blambda,\bdelta,\brho,(\emptyset,\emptyset)}=(-1)^{1+0.0}.\frac{1}{1}=-1$ as both $\SC(\blambda/\bnu)$ and $\lbrace d\in \DB(\blambda/\bnu), d\neq \blambda/\bmu\rbrace$ are emptysets. The reader is invited to check that this result agrees with Table \ref{H1 gen char tab} of the $\mathcal{H}_1$-generalized characters of $\mathcal{H}_2$
\end{ex}

\begin{ex} Suppose $n=3,$ $\bmu=((2),\emptyset),$ $\blambda=((2,1),\emptyset),$ $\bdelta=(\emptyset,(2))$ and $\brho=(\emptyset,(3)).$ Then, $j=3$ and $(\emptyset,\emptyset)$ is the only $\bnu$ to contribute to the summation in the above rule. Therefore, $\chi^{\bmu\nearrow \blambda}_{\bdelta \nearrow \brho}=c_{\bmu,\blambda,\bdelta,\brho,(\emptyset,\emptyset)}=(-1)^{1+0.1}.\frac{-1-0}{-1-1}=-\frac{1}{2}$ as $\SC(\blambda/\bnu)$ contains only the below red box, $\lbrace d\in \DB(\blambda/\bnu), d\neq \blambda/\bmu\rbrace$ consists of the below blue box and $\blambda/\bmu$ is the below yellow box

\[
\Big( \ytableausetup{nosmalltableaux}
\begin{ytableau}
*(red) 0& *(blue) 1  \\
*(yellow) -1
\end{ytableau},\emptyset \Big)
\]

This result can be verified using the definition in Equation (\ref{gen_char_hyp}) of the generalized characters along with the character tables of $\mathcal{H}_2$ and $\mathcal{H}_3$ that can be found in \cite{islami2021symmetric}. Indeed, if $x=(135246)$ is the permutation of $\mathcal{H}_3$ with marked-type $(\emptyset,(3^*)),$ then by definition
\[
\chi^{\bmu\nearrow \blambda}_{\bdelta \nearrow \brho}=\frac{\chi^{\bmu}(1)}{2^{2}(2)!}\sum_{h\in \mathbb{Z}_2 \wr \mathcal{S}_{2}}\chi^{\blambda}(xh)\chi^{\bmu}(h)=
\frac{1}{2^{2}(2)!}\big(\chi^{\blambda}(x)\chi^{\bmu}(1)+\chi^{\blambda}((135)(246))\chi^{\bmu}((12)(3)(4))
\]
\[+\chi^{\blambda}((146)(235))\chi^{\bmu}((1)(2)(34))+\chi^{\blambda}((146235))\chi^{\bmu}((12)(34))+\chi^{\blambda}((1)(2)(3546))\chi^{\bmu}((13)(24))
\]
\[+\chi^{\blambda}((12)(35)(46))\chi^{\bmu}((14)(23))+\chi^{\blambda}((12)(3546))\chi^{\bmu}((1324))+
\chi^{\blambda}((1)(2)(35)(46))\chi^{\bmu}((1423))\big)
\]
\[
=\frac{1}{8}\big((-1).(1)+(-1).(1)+(-1).(1)+(-1).(1)+(0).(1)+(0).(1)+(0).(1)+(0).(1) \big)=-\frac{1}{2}.
\] 

\end{ex}

\begin{ex} Suppose $n=3,$ $\bmu=((1),(1)),$ $\blambda=((1),(2)),$ $\bdelta=((1),(1))$ and $\brho=((1),(2)).$ Then, $j=2$ and both bipartitions $((1),\emptyset)$ and $(\emptyset,(1))$ contribute to the summation in the above rule. Therefore, 
$$\chi^{\bmu\nearrow \blambda}_{\bdelta \nearrow \brho}=c_{\bmu,\blambda,\bdelta,\brho,((1),\emptyset)}\chi^{((1),\emptyset)}_{((1),\emptyset)}+c_{\bmu,\blambda,\bdelta,\brho,(\emptyset,(1))}\chi^{(\emptyset,(1))}_{((1),\emptyset)}.$$ 
But $c_{\bmu,\blambda,\bdelta,\brho,(\emptyset,(1))}=0$ since $\blambda/(\emptyset,(1))=((1),(1))$ is not a broken border strip, $c_{\bmu,\blambda,\bdelta,\brho,((1),\emptyset)}=-1^{0+1.1}.\frac{1}{1}$ and $\chi^{((1),\emptyset)}_{((1),\emptyset)}=1.$ Thus, $\chi^{\bmu\nearrow \blambda}_{\bdelta \nearrow \brho}=-1.$
\end{ex}

By using Proposition \ref{Murnaghan_Nakayama_rule_hyperoctahedral}, we show below the complete rows corresponding to the marked bipartitions $((2,1^*),\emptyset)$ and $((1),(2^*))$ in the table of the $\mathcal{H}_2$-generalized characters of $\mathcal{H}_3.$

\bigskip

\begin{tabular}{p{2cm}p{2cm}p{2cm}p{2cm}p{2cm}p{2cm}p{2cm}}
&$((1,1,1^*),\emptyset)$&$((2^*,1),\emptyset)$&$((2,1^*),\emptyset)$&$((3^*),\emptyset)$&$((1,1^*),(1))$&$((1,1),(1^*))$\\
 \hline
\text{Order}    &   1 & 4 & 2 & 8 & 2 & 1 \\
\hline
$\chi^{((2,1^*),\emptyset)}$    &   1 & -1/2 & 1  & -1/2 & 1 & 1 \\
\hline
$\chi^{((1),(2^*))}$    &   2 & 1 & 0 & 0 & 0 & -2 \\
\hline
\end{tabular}

\bigskip

\begin{tabular}{p{2cm}p{2cm}p{2cm}p{2cm}p{2cm}p{2cm}p{2cm}}
&$((2^*),(1))$&$((2),(1^*))$&$((1^*),(1,1))$&$((1),(1,1^*))$&$((1^*),(2))$&$((1),(2^*))$\\
 \hline
\text{Order}    &  4 & 2 & 1 & 2 & 2 & 4 \\
\hline
$\chi^{((2,1^*),\emptyset)}$    &  -1/2 & 1 & 1 & 1 & 1 & -1/2 \\
\hline
$\chi^{((1),(2^*))}$    &  1 & 0 & -2 & 0 & 0 & -1 \\
\hline
\end{tabular}

\bigskip

\begin{tabular}{p{2cm}p{2cm}p{2cm}p{2cm}p{2cm}}
&$(\emptyset,(1,1,1^*))$&$(\emptyset,(2^*,1))$&$(\emptyset,(2,1^*))$&$(\emptyset,(3^*))$ \\
 \hline
\text{Order}    &   1 & 4 & 2 & 8\\
\hline
$\chi^{((2,1^*),\emptyset)}$    &  1 & -1/2 & 1 & -1/2\\
\hline
$\chi^{((1),(2^*))}$    &  2 & -1 & 0 & 0\\
\hline
\end{tabular}

\begin{cor}\label{Main_cor} If $\bdelta=(\emptyset,(n-1)),$ $\brho=(\emptyset,(n))$ and $a+b+1=n,$ then 
\[
\chi^{\bmu\nearrow \blambda}_{\bdelta \nearrow \brho}=
\begin{cases}
(-1)^b\frac{a}{a+b} & \text{if }  \bmu=((a,1^b),\emptyset) \text{ and } \blambda=((a+1,1^b),\emptyset)\\
(-1)^b\frac{b}{a+b} & \text{if } \bmu=((a+1,1^{b-1}),\emptyset) \text{ and } \blambda=((a+1,1^b),\emptyset)\\
(-1)^{b+1}\frac{a}{a+b} & \text{if } \bmu=(\emptyset,(a,1^b)) \text{ and } \blambda=(\emptyset,(a+1,1^b))\\
(-1)^{b+1}\frac{b}{a+b} & \text{if } \bmu=(\emptyset,(a+1,1^{b-1})) \text{ and } \blambda=(\emptyset,(a+1,1^b))\\
0 & \text{ otherwise.}
\end{cases}
\]
\end{cor}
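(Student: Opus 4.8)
The plan is to specialize the general Murnaghan--Nakayama rule of Proposition \ref{Murnaghan_Nakayama_rule_hyperoctahedral} to the very degenerate marked class $\bdelta\nearrow\brho$ with $\bdelta=(\emptyset,(n-1))$ and $\brho=(\emptyset,(n))$. First I would identify the relevant value of $j$: the marked box $\brho/\bdelta$ lies in the single row of length $n$ of the second component, so $j=n$ and hence $n-j=0$. The summation in the rule runs over $\bnu\subseteq\bmu$ with $\bnu\vdash n-j=0$, which forces $\bnu=(\emptyset,\emptyset)$. The sum therefore collapses to a single term, and since $\chi^{(\emptyset,\emptyset)}_{\brho-(n)}=\chi^{(\emptyset,\emptyset)}_{(\emptyset,\emptyset)}=1$ we obtain
\[
\chi^{\bmu\nearrow\blambda}_{\bdelta\nearrow\brho}=c_{\bmu,\blambda,\bdelta,\brho,(\emptyset,\emptyset)}.
\]
Everything thus reduces to evaluating this single coefficient.

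Next I would determine when the coefficient is nonzero. With $\bnu=(\emptyset,\emptyset)$ the relevant skew shape is $\blambda/\bnu=\blambda$ itself, and the coefficient vanishes unless $\blambda$ is a broken border strip, that is, unless one component of $\blambda$ is an ordinary partition with no $2\times2$ block while the other is empty. A partition with no $2\times2$ block is exactly a hook $(a+1,1^b)$, so the nonzero cases are precisely those where $\blambda$ is a hook of size $n=a+b+1$ sitting in component $0$ or in component $1$. Since $\bmu\nearrow\blambda$, the bipartition $\bmu$ is obtained from this hook by deleting one of its (at most two) removable corners: the arm corner $(1,a+1)$, giving the first-component partition $(a,1^b)$, or the leg corner $(b+1,1)$, giving $(a+1,1^{b-1})$. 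Combined with the two choices of component, this yields exactly the four listed cases, and every other $(\bmu,\blambda)$ gives $0$.

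Finally I would carry out the content and sign bookkeeping for the hook $\blambda=(a+1,1^b)$. Its unique sharp corner is the box $(1,1)$, its dull boxes are the arm end $(1,a+1)$ and the leg end $(b+1,1)$, and as a border strip it occupies $b+1$ rows, so $\height(\blambda/\bnu)=b$. Writing $t\in\{0,1\}$ for the index of the nonempty component, a box in position $(i,l)$ has content $(-1)^t(l-i)$, which gives content $0$ for the sharp corner, $(-1)^t a$ for the arm end and $-(-1)^t b$ for the leg end; moreover $P(\blambda/\bnu)=t$ while $P(j)=1$ because the marked part of $\brho$ lies in component $1$. In each of the four cases the added box $\blambda/\bmu$ coincides with one of the two dull boxes, so that box is removed from the denominator and exactly one dull box together with the single sharp corner survive. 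Substituting these data into
\[
c_{\bmu,\blambda,\bdelta,\brho,(\emptyset,\emptyset)}=(-1)^{\height(\blambda/\bnu)+P(\blambda/\bnu)P(j)}\frac{\prod_{s\in\SC(\blambda/\bnu)}[c(\blambda/\bmu)-c(s)]}{\prod_{d\in\DB(\blambda/\bnu),\,d\neq\blambda/\bmu}[c(\blambda/\bmu)-c(d)]}
\]
and simplifying the resulting ratios to $\tfrac{a}{a+b}$ or $\tfrac{b}{a+b}$ yields the four stated values, the factor $(-1)^{P(\blambda/\bnu)P(j)}=(-1)^{t}$ supplying the extra sign in the component-$1$ cases. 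I expect the only delicate point to be keeping the content signs and the exclusion of $\blambda/\bmu$ from $\DB(\blambda/\bnu)$ consistent across the four cases, together with verifying that the degenerate boundary values $a=0$ and $b=0$ are correctly annihilated by the $a/(a+b)$ and $b/(a+b)$ factors; the remainder is routine substitution.
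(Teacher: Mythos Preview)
Your proposal is correct and follows exactly the approach the paper intends: the paper's own proof is the single line ``Direct consequence of Proposition \ref{Murnaghan_Nakayama_rule_hyperoctahedral},'' and you have simply written out that direct consequence in full detail. Your identification of $j=n$, the collapse to $\bnu=(\emptyset,\emptyset)$, the characterisation of the surviving $\blambda$ as hooks in a single component, and the content/sign bookkeeping are all accurate.
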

\begin{proof}
Direct consequence of Proposition \ref{Murnaghan_Nakayama_rule_hyperoctahedral}.
\end{proof}

\begin{ex}
The nonzero entries in the column corresponding to the marked bipartition $(\emptyset,(3^*))$ in the table of the $\mathcal{H}_2$-generalized characters of $\mathcal{H}_3$ are given below

\bigskip

\begin{tabular}{p{2cm}p{2cm}}
&$(\emptyset,(3^*))$ \\
 \hline
\text{Order}     & 8\\
\hline
$\chi^{((1,1,1^*),\emptyset)}$    &    1\\
\hline
$\chi^{((2^*,1),\emptyset)}$    &     -1/2\\
\hline
$\chi^{((2,1^*),\emptyset)}$    &    -1/2\\
\hline
$\chi^{((3^*),\emptyset)}$    &     1\\
\hline
$\chi^{(\emptyset,(1,1,1^*))}$    &     -1\\
\hline
$\chi^{(\emptyset,(2^*,1))}$    &     1/2\\
\hline
$\chi^{(\emptyset,(2,1^*))}$    &     1/2\\
\hline
$\chi^{(\emptyset,(3^*))}$    &   -1
\end{tabular}
\end{ex}

\subsection*{Availability of data and material.} Data sharing not applicable to this article as no datasets were generated or analysed during the current study.

\subsection*{Code availability.} Not applicable.

\bibliographystyle{plain}
\bibliography{biblio}

\end{document}